\documentclass[11pt]{amsart}
%%%%%%%%%%%%%%%%%%%%%%%%%%%%%%%%%%%%%%%%%%%%%%%%%%%%%%%%%%%
%%%%%%%%%%%%%%%%%%%%%%%%%%%%%%%%%%%%%%%%%%%%%%%%%%%%%%%%%%%
%ultima versão junho20017
\usepackage{amssymb}
\usepackage{amsfonts}
\usepackage{amsmath}
\usepackage[mathscr]{eucal}
\usepackage{enumerate}
\usepackage{mathrsfs}
\usepackage{url}
\usepackage{xcolor}

\newtheorem{theorem}{Theorem}

\newtheorem{corollary}[theorem]{Corollary}
\newtheorem{lemma}[theorem]{Lemma}
\newtheorem{proposition}[theorem]{Proposition}
\theoremstyle{definition}

\newtheorem{definition}[theorem]{Definition}
\newtheorem{example}[theorem]{Example}
\newtheorem{remark}[theorem]{Remark}
\newtheorem{algorithm}[theorem]{Algorithm}

\sloppy

\begin{document}

\title[The set $\mathscr{L}(m,F)$]{The set of numerical semigroups of a given multiplicity and Frobenius number}

\author{M.B. Branco}
\address{Departamento de Matemática, Universidade de Évora (Portugal)}
\email{mbb@uevora.pt}

\author{I. Ojeda}
\address{Departamento de Matemática, Universidad de Extremadura (Spain)}
\email{\small ojedamc@unex.es}

\author{J.C. Rosales}
\address{Departamento de \'Algebra, Universidad de Granada (Spain)}
\email{jrosales@ugr.es}

\thanks{The first author is supported by the project FCT PTDC/MAT/73544/2006). 
The second author was partially supported by the research group FQM-024 (Junta de Extremadura/FEDER funds, GR15021) and by the project MTM2015-65764-C3-1-P (MINECO/FEDER, UE). 
The third author was partially supported by the research groups FQM-343 FQM-5849 (Junta de Andalucia/Feder) and by the project  MTM2014-55367-P (MINECO/FEDER, UE. 2010 Mathematics Subject Classification: 20M14, 11D07.}

\begin{abstract}
We study the structure of the family of numerical semigroups with fixed multiplicity and Frobenius number. We give an algorithmic method to compute all the semigroups in this family. As an application we compute the set of all numerical semigroups with given multiplicity and genus.
\end{abstract}

\keywords{Numerical semigroup,  Irreducible numerical semigroup, Frobenius number, Multiplicity, Genus, Kunz coordinates.}

\maketitle

%%%%%%%%%%%%%%%%%%%%%%%%%%%%%%%%%%%%%%%%%%%%%%%%%%%%%%%%%%%
%%%%%%%%%%%%%%%%%%%%%%%%%%%%%%%%%%%%%%%%%%%%%%%%%%%%%%%%%%%
\section*{Introduction}

Let $\mathbb{N}$ be the set of non-negative integers. A \textbf{numerical semigroup} is a submonoid $S$ of $(\mathbb{N},+)$ such that $\mathbb{N}\backslash S$ has finitely many elements; the cardinality of $\mathbb{N} \setminus S$ is the \textbf{genus} of $S$, denoted here $\operatorname{g}(S)$. If $S$ is a numerical semigroups, its \textbf{multiplicity}, $\operatorname{m}(S)$,  is the smallest positive integer belonging to $S.$ The largest integer not belonging to $S$ is the \textbf{Frobenius number} of $S$, denoted $\operatorname{F}(S)$.

There are different methods for the computation of the set of numerical semigroups with a fixed Frobenius number (see \cite{fund-gaps, computer}). These methods admits no a priori filtration through multiplicities which makes it very expensive, in computing resources, to compute families of numerical semigroups with small multiplicity and relatively large Frobenius number. The quotient of the Frobenius number and the multiplicity is closely related to the \textbf{depth} of the semigroups (see Remark \ref{Depth1}),  and to have families with depth greater than two has interest in the context of the Bras' or Wilf's conjectures (see \cite{bras, E, EF, Delgado}). 

The main aim of this paper is to give an algorithmic method that, given two integers $m$ and $F$, computes the  set of all numerical semigroups with Frobenius number $F$ and multiplicity $m$. We denote this set $\mathscr L(m,F)$. 

Following the same strategy as in \cite{computer}, we define an equivalence relation $\sim$ on $\mathscr L(m,F)$ such that each equivalence class contains one and only one irreducible numerical semigroup. Recall that a numerical semigroup is \textbf{irreducible} if it cannot be expressed as the intersection of two numerical semigroups properly containing it. We denote by $\mathscr I(m, F)$ the set of all irreducible numerical semigroups with  multiplicity $m$ and Frobenius number $F$. Thus, our equivalence relation establish a bijection $\mathscr L(m,F)/\!\sim\ \cong \mathscr I(m,F)$ which allow us to compute $\mathscr L(m,F)$ by determining the equivalence class modulo $\sim$ of each semigroup in $\mathscr I(m,F)$. Proceeding in this way, we divide our main objective in two subtask, namely 1) compute $\mathscr I(m,F)$ and 2) determine the class modulo $\sim$ of an element of $\mathscr I(m,F)$. To this end, we have formulated Algorithms \ref{24} and \ref{14}, respectively. And we have also include ``non-polished'' implementations in GAP \cite{gap} of our algorithms that takes advantage of the GAP package functionalities \texttt{NumericalSgps} \cite{numericalsgps}. Soon, more polished implementations of our algorithms will be included in the development version of \texttt{NumericalSgps} available at \url{https://gap-packages.github.io/numericalsgps}

This paper is organized as follows. In Section \ref{S2} we give the necessary and sufficient conditions on the integers $F$ and $m$ for the existence of numerical semigroups with Frobenius number $F$ and multiplicity $m$ (Proposition \ref{1}). In Section \ref{S3}, we define the equivalence relation $\sim$ on $\mathscr L(m,F)$ mentioned above. The main result here is Theorem \ref{6} which states that in each equivalence  class modulo $\sim$ there is one and only one irreducible numerical semigroup. In Section \ref{S5}, we study in depth the structure of $\mathscr I(m,F)$. We show that $\mathscr I(m,F)$ has an structure of rooted tree that allows us to formulate an algorithm for the computation of all the semigroups in $\mathscr I(m,F)$. The root of the tree is the semigroup $C(m,F)$ that we completely determine in Proposition \ref{21}, we emphasize here the important role that plays the \textbf{ratio} of a numerical semigroup (see Definition \ ref {ratio}). Of course, the main result in this section is Algorithm \ref{24}. However, we would like to draw attention to the interpretation of the Kunz coordinates of $\mathscr I(m,F)$ as solutions of a particular integer program (see \eqref{IP}) which converts our algorithm of a solver to these problems. Once we have developed an algorithm for the computation of $\mathscr I(m,F)$, in Section \ref{S4}, we perform the computation of the classes module $\sim$ which leads to Algorithm \ref{14}.
Finally, in Section \ref{S7}, we show that it is possible to adapt our algorithms for the computation of the set of numerical semigroups with a (suitable) given multiplicity and genus. We close this paper with a remark that points out towards the computation of the set of numerical semigroups with a (suitable) given depth and genus which in our opinion will be valuable for the researchers dealing with Wilf's and Bras' conjectures.

%%%%%%%%%%%%%%%%%%%%%%%%%%%%%%%%%%%%%%%%%%%%%%%%%%%%%%%%%%%
%%%%%%%%%%%%%%%%%%%%%%%%%%%%%%%%%%%%%%%%%%%%%%%%%%%%%%%%%%%
\section{Preliminaries}\label{S2}

In this section we describe the conditions that $m$ and $F$ must satisfy for the existence of numerical semigroups with multiplicity $m$ and Frobenius number $F$. But first, we need to introduce some notation and recall a couple of well-known results.

Let $\mathcal A$ be a nonempty subset of $\mathbb{N}$. We write $\left\langle \mathcal A\right\rangle$ for the submonoid of $(\mathbb{N},+)$ generated by $\mathcal A$, that is, $$\left\langle \mathcal A\right\rangle :=\{\sum_{i=1}^n \lambda_i\,a_i ~|~ n\in\mathbb{N}\backslash\{0\}, ~ a_1,\ldots , a_n\in \mathcal A,\text{and} ~ \lambda_1,\ldots, \lambda_n\in \mathbb{N}  \}.$$ If $M$ is a submonoid of $(\mathbb{N},+)$ and $\mathcal A$ is a subset of $M$ such that $M=\left\langle \mathcal A\right\rangle$ then we say that $\mathcal A$ is a \textbf{system of generators} of $M$. Moreover, if $M\neq \left\langle \mathcal A'\right\rangle$ for all $\mathcal A' \varsubsetneq \mathcal A$, then  we say that $\mathcal A$ is a \textbf{minimal system of generators} of $M$.

\begin{lemma}\label{8}\cite[Corollary 2.8]{libro}.
Let $M$ be a submonoid of $(\mathbb{N},+)$. Then $M$ has a unique minimal system of generators, which in addition is finite.
\end{lemma}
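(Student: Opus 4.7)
The plan is to construct the minimal generating set explicitly. Set $M^{\ast} := M \setminus \{0\}$ and $M^{\ast} + M^{\ast} := \{a+b : a, b\in M^{\ast}\}$; I claim that
$$\mathcal A := M^{\ast} \setminus (M^{\ast} + M^{\ast})$$
is the unique minimal system of generators of $M$ and that it is finite. That $\mathcal A$ generates $M$ follows by strong induction on $n\in M^{\ast}$: either $n\in \mathcal A$, or $n = a + b$ with $a, b\in M^{\ast}$ strictly smaller than $n$, and both summands lie in $\langle\mathcal A\rangle$ by the inductive hypothesis.

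Next, I would show that every system of generators $\mathcal A'$ of $M$ contains $\mathcal A$. Given $x\in \mathcal A$, write $x = \sum_i \lambda_i\, a_i'$ with $a_i'\in \mathcal A'$ and $\lambda_i\in \mathbb N$; after discarding any zero summands we may assume $a_i'\in M^{\ast}$ for every $i$. If $\sum_i \lambda_i \geq 2$ the sum would split into two nontrivial pieces, each in $M^{\ast}$, forcing $x\in M^{\ast} + M^{\ast}$ against $x\in \mathcal A$; hence $\sum_i \lambda_i = 1$ and $x = a_j'\in \mathcal A'$ for some $j$. This inclusion $\mathcal A \subseteq \mathcal A'$ immediately yields both the minimality of $\mathcal A$ and uniqueness: any minimal system of generators must coincide with $\mathcal A$.

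For finiteness, let $m$ be the smallest element of $M^{\ast}$. I claim that $\mathcal A$ contains at most one element per residue class modulo $m$, which gives $|\mathcal A|\leq m$. Indeed, if distinct $n, n'\in \mathcal A$ satisfy $n' < n$ and $n\equiv n' \pmod{m}$, then $n - n'$ is a positive multiple of $m$, hence $n - n'\in M^{\ast}$, and the decomposition $n = n' + (n - n')$ shows $n\in M^{\ast} + M^{\ast}$, contradicting $n\in\mathcal A$. The step I expect to require the most care is the verification that every system of generators contains $\mathcal A$: one must remember to discard zero summands before splitting a decomposition, since $0$ could a priori belong to an arbitrary generating set.
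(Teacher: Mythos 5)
Your proof is correct and complete; the paper itself gives no proof of this lemma (it simply cites \cite[Corollary 2.8]{libro}), and your argument --- taking $\mathcal A = M^{\ast}\setminus(M^{\ast}+M^{\ast})$, showing it generates by strong induction, showing it is contained in every generating set, and bounding its size by the number of residue classes modulo $\min M^{\ast}$ --- is essentially the standard proof given in that reference. No issues to report.
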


Given a submonoid $M$  of $(\mathbb{N},+)$, we denote by  $\operatorname{msg}(M)$ the  minimal system of generators of $M$. 

The following result follows from \cite[Lemma 2.3]{libro}.

\begin{lemma}\label{9}
Let S be a numerical semigroup and $x\in S$.  Then $x\in\operatorname{msg}(S)$ if and only if $S\backslash \{x\}$
is a numerical semigroup.
\end{lemma}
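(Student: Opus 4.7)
The plan is to reduce both implications to the classical characterization: an element $x$ of a numerical semigroup $S$ lies in $\operatorname{msg}(S)$ if and only if $x$ cannot be written as $a+b$ with $a,b\in S\setminus\{0\}$. Once this is established, the lemma falls out essentially by unwinding definitions.

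First, I would dispose of the trivial conditions for $S\setminus\{x\}$ to be a numerical semigroup. Since $\mathbb{N}\setminus(S\setminus\{x\})=(\mathbb{N}\setminus S)\cup\{x\}$ is always finite, and since $0\in S\setminus\{x\}$ as long as $x\neq 0$ (which is automatic if $x\in\operatorname{msg}(S)$, because $0$ is never a minimal generator), the only nontrivial requirement is closure under addition. Closure fails precisely when there exist $a,b\in S\setminus\{x\}$ with $a+b=x$, and since $a+b=x$ forces $a,b<x$ (hence $a,b\neq x$) whenever $a,b>0$, the closure condition is equivalent to the statement that $x$ admits no decomposition $x=a+b$ with $a,b\in S\setminus\{0\}$.

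For the implication $(\Rightarrow)$, I would argue contrapositively: if $x=a+b$ with $a,b\in S\setminus\{0\}$, then $a,b<x$, and by Lemma \ref{8} both $a$ and $b$ are $\mathbb{N}$-linear combinations of elements of $\operatorname{msg}(S)$; since any minimal generator appearing in such a combination is at most $\max(a,b)<x$, none of those generators is $x$ itself, which exhibits $x$ as generated by $\operatorname{msg}(S)\setminus\{x\}$. This contradicts the minimality of $\operatorname{msg}(S)$, so no such decomposition exists and $S\setminus\{x\}$ is closed. For the implication $(\Leftarrow)$, assume $S\setminus\{x\}$ is a numerical semigroup. Then $x\neq 0$, and closure of $S\setminus\{x\}$ forbids any decomposition $x=a+b$ with $a,b\in S\setminus\{0\}$; hence $x$ must appear in every system of generators of $S$, in particular in $\operatorname{msg}(S)$.

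There is no real obstacle here; the only subtlety is making precise the inductive/size argument that a non-minimal element admits a decomposition into strictly smaller elements of $S$, which is where Lemma \ref{8} is invoked. The cleanest presentation is to prove the equivalence "$x\in\operatorname{msg}(S)\iff x$ has no nontrivial decomposition in $S$" as a preliminary observation, and then note that this is exactly the closure condition for $S\setminus\{x\}$ discussed above.
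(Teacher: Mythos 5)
Your proof is correct. The paper gives no argument of its own here---it simply cites \cite[Lemma 2.3]{libro}, which is exactly the characterization you prove as your ``preliminary observation'', namely that $\operatorname{msg}(S)=(S\setminus\{0\})\setminus\bigl((S\setminus\{0\})+(S\setminus\{0\})\bigr)$---so your route is the intended one, merely made self-contained. Both directions and the reduction of closure of $S\setminus\{x\}$ to the non-decomposability of $x$ are handled correctly.
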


Given $\{x_1\leq x_2\leq \cdots\leq x_n\}\subseteq \mathbb{N}$ we denote by $\{x_1, x_2,\ldots, x_n, \rightarrow\}$ the 
set $\{x_1, x_2,\ldots, x_n\}\cup\{x\in\mathbb{N} ~|~ x > x_n\}$.

Let $a$ and $b$ be two integers, we say that $a$ divides $b$ if there exists an integer $c$ such that $b=ca$, in this case, we write $a|b$. Otherwise, we will write $a\nmid b$.

Let $m$ and $F$ be two integers and let $\mathscr L(m, F)$ be set of numerical semigroups with multiplicity $m$ and Frobenius number $F$. By definition, if $m \leq 0$ or $F \leq -1$ then $\mathscr L(m, F) = \varnothing$. Moreover, if $m = 1$, then $\mathscr L(m, F) \neq \varnothing $ if and only if $F = -1$. In this case, $\mathscr L(m, F) = \{\mathbb{N}\}$.

\begin{proposition}\label{1}
Let $m$ and $F$ be two integers. If $(m,F) \neq (1,-1)$, then $\mathscr L(m, F)\neq \varnothing$ if only if $F\geq m-1 \geq 1$ and $m\nmid F$.
\end{proposition}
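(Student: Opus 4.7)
The plan is to argue the two implications separately, where necessity is routine and sufficiency is handled by an explicit construction.

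For necessity, suppose $S \in \mathscr{L}(m,F)$ with $(m,F) \neq (1,-1)$. First, since $S \neq \mathbb{N}$ implies $F \geq 0$, and since $m = 1$ would force $S = \mathbb{N}$ (contradiction), we must have $m \geq 2$, i.e.\ $m-1 \geq 1$. Next, because $m$ is the smallest positive element of $S$, none of $1, 2, \ldots, m-1$ lies in $S$; each is therefore a gap, so $F \geq m-1$. Finally, if $m \mid F$, then $F = km \in \langle m\rangle \subseteq S$, contradicting $F \notin S$. Hence $m \nmid F$.

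For sufficiency, assume $F \geq m-1 \geq 1$ and $m \nmid F$. Write $F = qm + r$ with $0 \leq q$ and $1 \leq r \leq m-1$ (possible since $m \nmid F$; note $q = 0$ precisely when $F = m-1$). Using the notation introduced above Proposition~\ref{1}, I would propose the set
\[
S \ := \ \{0, m, 2m, \ldots, qm, F+1, \rightarrow\}.
\]
The key verification is that $S$ is a submonoid of $(\mathbb{N},+)$. For two elements $im, jm$ with $0 \leq i,j \leq q$, their sum is $(i+j)m$; if $i+j \leq q$ it lies in the initial list, and otherwise $(i+j)m \geq (q+1)m = qm + m \geq qm + r + 1 = F+1$, so it lies in the tail. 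Sums involving an element $\geq F+1$ are automatic. Thus $S$ is closed under addition, and clearly $\mathbb{N} \setminus S$ is finite, so $S$ is a numerical semigroup. The smallest positive element is $m$, so $\operatorname{m}(S) = m$. The gaps of $S$ are exactly
\[
\bigcup_{i=0}^{q-1}\{im+1, \ldots, (i+1)m - 1\} \ \cup \ \{qm+1, \ldots, qm+r\},
\]
whose largest element is $qm + r = F$; so $\operatorname{F}(S) = F$.

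I do not expect any real obstacle: the main subtlety is just to pick a construction that simultaneously realises the prescribed multiplicity and Frobenius number. The choice above is essentially forced by the Euclidean division $F = qm + r$ and works uniformly, including the boundary case $F = m-1$ (where $q=0$ and $S = \{0, m, \rightarrow\}$).
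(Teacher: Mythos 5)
Your proof is correct and takes essentially the same route as the paper: the necessity argument is identical, and your witness $\{0,m,2m,\ldots,qm,F+1,\rightarrow\}$ is exactly the paper's semigroup $\langle m\rangle\cup\{F+1,\rightarrow\}$ written out elementwise (the multiples of $m$ not exceeding $F$ are precisely $0,m,\ldots,qm$). You merely spell out the closure and gap verifications that the paper leaves implicit.
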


\begin{proof}
If $S \in \mathscr L(m, F)$ then $m-1\not\in S$ and thus $F\geq m-1 \geq 1$.  Furthermore $F\not\in S$ and $\left\langle m\right\rangle\subseteq S$.  Hence $F\not\in \left\langle m\right\rangle$ which implies that $m\nmid F$. Conversely, if $F \geq m-1$ and $m \nmid F,$ then $S = \left\langle m\right\rangle\cup \{F+1, \longrightarrow\}$ is a numerical semigroup of multiplicity $m$ and Frobenius number $F$, that is, $S \in \mathscr L(m, F)$.
\end{proof}

Notice that $\left\langle m\right\rangle\cup \{F+1, \rightarrow\}$ is the (unique) minimal (with respect to the inclusion) element in $\mathscr L(m, F)$, that is to say, $\left\langle m\right\rangle\cup \{F+1, \rightarrow\} \subseteq S$ for every $S \in \mathscr L(m, F)$.

\begin{proposition}\label{3}
Let $m$ be an integer  greater than or equal to two.
\begin{enumerate}[(a)]
\item $\mathscr L(m, m-1)=\big\{\{0,m\rightarrow\}\big\}$
\item If $m < F < 2m$, then \[\mathscr L(m, F)=\big\{ \{0,m\} \cup A \cup \{F+1,\rightarrow\} ~|~ A \subseteq \{m+1,\ldots,F-1\} \big\}.\]
\item If $\mathscr L(2, F) \neq \varnothing$, then $F$ is odd and furthermore $\mathscr L(2, F)=\{\langle 2,F+2\rangle\} $. 
\end{enumerate}
\end{proposition}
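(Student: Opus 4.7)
The three parts are essentially immediate corollaries of the definition of multiplicity/Frobenius number together with Proposition~\ref{1}; the only real input beyond this is the numeric inequality $F<2m$ in (b) and a parity argument in (c). Here is how I would organize the proof.

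For (a), I would simply unpack the definitions. If $S \in \mathscr L(m,m-1)$, then the multiplicity condition forces $S\cap[0,m)=\{0\}$, while $\operatorname{F}(S)=m-1$ forces $\{m,m+1,\ldots\}\subseteq S$. These two facts determine $S$ completely, giving $S=\{0,m,\rightarrow\}$. Conversely one checks this set is closed under addition (trivial, since a positive element plus anything is either itself or at least $2m\ge m$), has multiplicity $m$, and Frobenius number $m-1$.

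For (b), I would prove both inclusions. Given $S\in\mathscr L(m,F)$, the multiplicity condition gives $S\cap[0,m]=\{0,m\}$, and $\operatorname{F}(S)=F$ gives $F\notin S$ together with $\{F+1,\rightarrow\}\subseteq S$; setting $A=S\cap\{m+1,\ldots,F-1\}$ produces the claimed form. For the reverse inclusion, fix any $A\subseteq\{m+1,\ldots,F-1\}$ and let $T=\{0,m\}\cup A\cup\{F+1,\rightarrow\}$. The key observation is that any two positive elements $a,b\in T$ satisfy $a+b\ge 2m>F$, hence $a+b\in\{F+1,\rightarrow\}\subseteq T$; this is precisely where the hypothesis $F<2m$ is used. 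So $T$ is a submonoid of $(\mathbb{N},+)$ with finite complement, smallest positive element $m$, and largest gap $F$, i.e.\ $T\in\mathscr L(m,F)$.

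For (c), first note that Proposition~\ref{1} forces $F\ge 1$ and $2\nmid F$, hence $F$ is odd. Now let $S\in\mathscr L(2,F)$. Since $2\in S$ and $F+2>F$ implies $F+2\in S$, we get $\langle 2,F+2\rangle\subseteq S$. For the reverse inclusion, observe that $\langle 2,F+2\rangle=\{0,2,4,\ldots\}\cup\{F+2,F+4,\ldots\}$, so I only need to rule out any odd $k$ with $1\le k\le F$ from $S$. But if such a $k$ lay in $S$, then $F-k$ is a nonnegative even integer, hence $F-k\in\langle 2\rangle\subseteq S$, forcing $F=k+(F-k)\in S$, which contradicts $\operatorname{F}(S)=F$. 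Therefore $S=\langle 2,F+2\rangle$.

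The main obstacle, if any, is purely bookkeeping: being careful in (b) that the asserted set really is a numerical semigroup (this hinges entirely on $F<2m$), and in (c) making the parity-plus-absorption argument clean. Nothing deeper is needed.
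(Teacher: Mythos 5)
Your proof is correct and follows essentially the same route as the paper's: unpack the definitions in (a), sandwich $S$ between $\langle m\rangle\cup\{F+1,\rightarrow\}$ and $\{0,m,\rightarrow\}\setminus\{F\}$ in (b) using $a+b\ge 2m>F$, and use parity plus absorption in (c). The only difference is cosmetic: the paper invokes \cite[Proposition 6(b)]{IJAC} for the maximal element in (b) and phrases (c) as ``an odd $x\in S$ forces $x+\mathbb{N}\subseteq S$, hence $x\ge F+2$,'' whereas you argue both points directly, which makes your version slightly more self-contained but not substantively different.
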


\begin{proof}
(a) If $S$ is a numerical semigroup with multiplicity $m$ and Frobenius number $F = m-1$, then $S \subseteq \{0, m, \rightarrow\}$. Now, since $\left\langle m\right\rangle\cup \{F+1, \rightarrow\} = \{0, m, \rightarrow\},$ our claim follows.

(b) If $m<F<2m$, by \cite[Proposition 6(b)]{IJAC}, one has that the numerical semigroup $\{0,m\rightarrow\}\backslash \{F\}$ is the (unique) maximal (with respect to the inclusion) element of $\mathscr L(m, F)$. Thus, $\left\langle m\right\rangle\cup \{F+1, \rightarrow\} \subseteq S \subseteq \{0,m\rightarrow\}\backslash \{F\}$, for every $S \in  \mathscr L(m, F)$. Therefore, $\mathscr L(m, F) \subseteq \big\{ \{0,m\} \cup A \cup \{F+1,\rightarrow\} ~|~ A \subseteq \{m+1,\ldots,F-1\} \big\} $. Now,  to get the opposite inclusion, it suffices to observe that $S_A = \{0,m\} \cup A \cup \{F+1,\rightarrow\}$ is a numerical semigroup, for every $A \subseteq \{m+1,\ldots,F-1\}$, and that $S_A$ has multiplicity $m$ and Frobenius number $F$.

(c) If $\mathscr L(2, F) \neq \varnothing$, then $F$ cannot be a multiple of $2$, that is, $F$ is odd. Moreover, given $S \in \mathscr L(2, F)$ and an odd integer $x \in S$, it follows that $x + \mathbb{N} \subseteq S$ thus $x \geq F+2$ and we are done.  
\end{proof}

\begin{remark}\label{Depth1}
Let $S$ be a numerical semigroup with multiplicity $m$ and Frobenius number $F$, and write $F+1=qm-r$ for some integers $q$ and $r$ with $0\le r <m$. The integer $q$ is called the \textbf{depth} of $S$ (see \cite{EF}). Observe that $S$ has depth $2$ if and only $m < F < 2m$. Therefore, Proposition \ref{3}(b) can be understand as a characterization of the numerical semigroups with depth $2$.

Depth equal to two has a particular relevance, since Bras' conjecture holds in the restricted class of numerical semigroups having this depth \cite{EF}. On the other hand, in \cite{E} it is shown that Wilf's conjecture is true for depth lesser than or equal to three. These facts make valuable to obtain families of numerical semigroups with high depth.
\end{remark}

%%%%%%%%%%%%%%%%%%%%%%%%%%%%%%%%%%%%%%%%%%%%%%%%%%%%%%%%%%%
%%%%%%%%%%%%%%%%%%%%%%%%%%%%%%%%%%%%%%%%%%%%%%%%%%%%%%%%%%%
\section{A partition of  $\mathscr L(m, F)$}\label{S3}

The goal of this section is to give a partition $\mathscr L(m,F)$ which will lead to the structure of an algorithmic procedure to compute $\mathscr L(m,F)$. As a consequence of Proposition \ref{3} we concentrate on the case $m\geq 3$ and $F>2m$, that is to say, we will study the families with depth and multiplicity greater than or equal to three. 

Given a numerical semigroup $S$, we will write \[\theta(S)=\{s\in S ~|~m(S)<s< \frac{F(S)}{2}\}, \] and let $\sim$ be relation on $\mathscr L(m,F)$ by defined by \[S\ \sim \ S' ~ \text{if and only if} ~ \theta(S)=\theta(S');\] clearly $\sim$ is an equivalence relation on $\mathscr L(m,F)$. Let $[S]$ denote the class of $S\in \mathscr L(m, F)$ modulo $\sim$, that is, $[S]:=\{ S'\in \mathscr L(m, F) ~|~ S\ \sim\ S'\}$.

\begin{lemma}\label{4}
The class of $S \in \mathscr L(m, F)$ modulo $\sim$ is closed under union and intersection of its elements. That is to say, $[S]$ has a lattice structure given by the inclusion.
\end{lemma}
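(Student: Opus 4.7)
The plan is to verify the two closure claims separately; the intersection is essentially formal, and the substantive content lies in the union case. Throughout I would fix $S_1, S_2 \in [S]$, so that $\operatorname{m}(S_i) = m$, $\operatorname{F}(S_i) = F$, and $\theta(S_i) = \theta(S)$ for $i=1,2$.

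For $S_1 \cap S_2$, I would note that the intersection of two submonoids of $(\mathbb{N},+)$ is a submonoid, and its complement $(\mathbb{N}\setminus S_1)\cup(\mathbb{N}\setminus S_2)$ is a union of two finite sets, hence finite, so $S_1\cap S_2$ is a numerical semigroup. Since $m\in S_1\cap S_2$ and no positive integer less than $m$ lies in $S_1$, its multiplicity is $m$. Since $F\notin S_1$ while $\{F+1,\rightarrow\}\subseteq S_1\cap S_2$, its Frobenius number is $F$. Finally $\theta(S_1\cap S_2)=\theta(S_1)\cap\theta(S_2)=\theta(S)$, and so $S_1\cap S_2\in[S]$.

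For $S_1\cup S_2$ the only nontrivial point is closure under addition, and this is the step I expect to be the main obstacle. The key observation is that if $x\in S_i$ with $m<x<F$, then either $x\in\theta(S_i)=\theta(S)$ (so $x\in S_1\cap S_2$) or $x>F/2$; the borderline case $x=F/2$ is ruled out because it would give $F=x+x\in S_i$, contradicting $F=\operatorname{F}(S_i)$. It follows that every element of the symmetric difference $S_1\triangle S_2$ lies strictly between $F/2$ and $F$. Given $a,b\in S_1\cup S_2$, if both lie in the same $S_i$ then $a+b\in S_i\subseteq S_1\cup S_2$; otherwise one may assume $a\in S_1\setminus S_2$ and $b\in S_2\setminus S_1$, whence $a,b>F/2$ and $a+b>F$, so $a+b\in S_1\cap S_2$. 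Thus $S_1\cup S_2$ is closed under addition, and together with the obvious finiteness of its complement (which is contained in $\mathbb{N}\setminus S_1$) this makes it a numerical semigroup.

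Checking that $S_1\cup S_2\in[S]$ is then immediate: its multiplicity is $m$, since $\{1,\dots,m-1\}$ misses both $S_i$; its Frobenius number is $F$, since $F\notin S_1\cup S_2$ while $\{F+1,\rightarrow\}\subseteq S_1\cup S_2$; and $\theta(S_1\cup S_2)=\theta(S_1)\cup\theta(S_2)=\theta(S)$. The whole argument hinges on the observation that the definition of $\theta$ confines any discrepancies between members of $[S]$ to the interval $(F/2,F)$, a range where two elements simply cannot sum to anything $\leq F$.
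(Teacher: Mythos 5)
Your proof is correct and follows essentially the same route as the paper's: the intersection case is formal, and the union case rests on the same key observation that $\theta(S_1)=\theta(S_2)$ forces every element of $S_1\triangle S_2$ to exceed $F/2$, so cross-sums land above $F$. You simply spell out more of the routine verifications (multiplicity, Frobenius number, the exclusion of $x=F/2$) than the paper does.
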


\begin{proof}
Assume  that $\{S_1,S_2\}\subseteq [S]$ and let us prove that $\{S_1\cap S_2, S_1\cup S_2\}\subseteq [S]$. Clearly, $S_1\cap S_2\in\mathscr L(m, F)$. Moreover, 
 $\theta(S_1)=\theta(S_2)=\theta (S)$ and thus $\theta (S_1\cap S_2)=\theta(S)$. Hence $S_1\cap S_2\in [S]$. Now, we claim that $S_1\cup S_2$ is a semigroup. Indeed, if $x\in S_1\backslash S_2$ and $y\in S_2\backslash S_1$, then $x$ and $y$ are greater than $F/2$, because $\theta(S_1) = \theta(S_2).$ Therefore $x+y > F$ and, consequently, $x+y \in S_1\cup S_2$. Hence we have $S_1\cup S_2\in \mathscr L(m, F)$. Finally, since $\theta( S_1\cup S_2)=\theta( S_1)=\theta( S_2)=\theta(S)$ we conclude that $S_1\cup S_2\in [S]$.
\end{proof}

Observe that \[Z([S])=\bigcap_{S'\in[S]} S'\quad  \text{and}\quad  U([S])=\bigcup_{S'\in[S]} S'\] are the minimum and the  maximum (with respect to the inclusion) of $[S]$, respectively. In particular, both belong to $\mathscr L(m, F)$ by Lemma \ref{4}.

One of the keys to this work is the following result which shows that  $U([S])$ is the unique irreducible numerical semigroup belonging to $[S]$.

\begin{theorem}\label{6}
Let $m$ and $F$ be  positive integers such that  $m\geq 3$, $F> 2m$ and  $m\nmid F$. If $S\in \mathscr L(m,F)$,  then $[S]\cap \mathscr I(m,F)=\{U([S])\}$. Moreover $U([S])= S\cup \{x\in \mathbb{N}\backslash S ~| ~  F-x\not\in S ~\text{and} ~ x>\frac{F}{2}\}$.\end{theorem}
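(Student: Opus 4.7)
The plan is to give an explicit description of $U([S])$ and verify that it is the unique irreducible numerical semigroup in the class $[S]$. Set
\[ T := S\cup \{x\in \mathbb{N}\setminus S\mid F-x\not\in S\text{ and } x>F/2\}. \]
I will prove (i) $T\in \mathscr L(m,F)$ and $T\in[S]$; (ii) $S'\subseteq T$ for every $S'\in[S]$, so $T=U([S])$; (iii) $T$ is irreducible; (iv) any $T'\in[S]\cap\mathscr I(m,F)$ equals $T$.

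For (i), first note that any $x\in T\setminus S$ satisfies $F/2<x<F$: indeed $x\leq F$ because $x>F$ forces $x\in S$, and $x=F$ is ruled out by $F-x=0\in S$. To show closure under addition, take $a,b\in T$. If $a,b\in S$ there is nothing to do. If $a\in S\setminus\{0\}$ and $b\in T\setminus S$, then $a+b>F/2$; if $a+b>F$ then $a+b\in S$, if $a+b=F$ then $F-b=a\in S$ contradicting $b\in T\setminus S$, and if $F/2<a+b<F$, then $a+b\notin S$ would give (since $a\in S$) that $F-(a+b)=(F-b)-a\notin S$, hence $a+b\in T$. If $a,b\in T\setminus S$, then $a+b>F\geq F(S)$ so $a+b\in S$. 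Hence $T$ is a submonoid of $\mathbb N$ with finite complement. Its multiplicity is $m$ because $T\setminus S\subseteq (F/2,F)$ and $F/2>m$ (here we use $F>2m$), and its Frobenius number is $F$ since $F\notin T$ and $F+1,F+2,\ldots\in S$. Finally, $T\setminus S$ lies above $F/2$, so $\theta(T)=\theta(S)$ and $T\in[S]$.

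For (ii), take $S'\in[S]$ and $x\in S'\setminus S$. Since $\operatorname{m}(S')=m$, we have $x\geq m$, and $x=m$ would give $x\in S$, so $x>m$. If $m<x<F/2$ then $x\in\theta(S')=\theta(S)\subseteq S$, a contradiction; $x=F/2$ is impossible because $x+x=F\notin S'$. Thus $x>F/2$, and it remains to check $F-x\notin S$. Suppose for contradiction $F-x\in S$. The case $F-x=0$ contradicts $x=F\notin S'$; if $0<F-x<m$, then $F-x\notin S$ trivially; if $F-x=m$, then $x+m=F\in S'$, contradiction; and if $m<F-x<F/2$, then $F-x\in\theta(S)=\theta(S')\subseteq S'$ so again $F=x+(F-x)\in S'$, a contradiction. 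Therefore $F-x\notin S$ and $x\in T$. Hence $S'\subseteq T$, proving $U([S])=T$.

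For (iii) I use the standard characterization (see \cite{libro}): a numerical semigroup with Frobenius number $F$ is irreducible if and only if, for every gap $h\neq F/2$, $F-h$ belongs to the semigroup. Let $h\in \mathbb N\setminus T$ with $0<h<F$ and $h\neq F/2$. If $h<F/2$ and $F-h\notin S$, then $F-h\in\mathbb N\setminus S$ satisfies $F-(F-h)=h\notin S$ and $F-h>F/2$, so $F-h\in T$; if $F-h\in S\subseteq T$ we are done as well. If $h>F/2$, then $h\notin T$ forces the defining condition to fail, i.e.\ $F-h\in S\subseteq T$. Thus $T$ is irreducible, and (iv) follows at once: if $T'\in[S]\cap\mathscr I(m,F)$ then by (ii) $T'\subseteq T$, and since both have Frobenius number $F$ and $T'$ is maximal for inclusion among semigroups with this Frobenius number, $T'=T$.

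The main obstacle is step (ii), since one has to rule out that an element of some $S'\in[S]$ falls in the range $(m,F/2]$ outside of $S$; the carefully tailored definition of $\theta$ (and the hypotheses $F>2m$, $m\nmid F$) is exactly what makes this work.
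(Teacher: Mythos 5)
Your proof is correct, but it follows a genuinely different route from the paper's. The paper argues top-down: it already knows from Lemma \ref{4} that $U([S])$ is the maximum of $[S]$, shows $U([S])$ is irreducible by contradiction (if not, Lemma \ref{5} produces a gap $h$ with $\frac{F}{2}<h<F$ such that $U([S])\cup\{h\}$ still lies in $[S]$, contradicting maximality), gets uniqueness from the characterization of irreducibility as maximality among semigroups with fixed Frobenius number, and only then identifies the explicit formula by quoting Lemma \ref{5}(d) (the set $S\cup\{x\in\mathbb{N}\setminus S \mid F-x\notin S,\ x>\frac{F}{2}\}$ is irreducible) together with the uniqueness just proved. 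You instead work bottom-up from the explicit set $T$: you verify by hand that $T$ is a numerical semigroup in $[S]$ (essentially reproving the relevant part of Lemma \ref{5}(d)), you prove \emph{directly} that every $S'\in[S]$ is contained in $T$ --- a step the paper never needs, since it gets the identification $T=U([S])$ indirectly through uniqueness --- and you check irreducibility of $T$ by the gap-symmetry criterion (the content of Lemma \ref{5}(c)). Your step (ii) is the real added value: it shows concretely why an element of some $S'\in[S]$ cannot fall in $(m,F/2]$ outside $S$ and must have $F-x\notin S$, and as a by-product it reproves, for this class, that the union of all members of $[S]$ is a numerical semigroup without invoking Lemma \ref{4}. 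The trade-off is length: the paper's proof is a few lines because it leans on the cited lemmas, while yours is self-contained and more informative about the structure of the class. All the case analyses in your steps (i)--(iii) check out (including the boundary cases $x=F$, $x=F/2$, $F-x=m$, and $0<F-x<m$), and the hypotheses $F>2m$ and $m\geq 3$ enter exactly where you say they do, via $\frac{F}{2}>m$.
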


For the sake of completeness, we include the following result which will be used both in the proof of Theorem \ref{6} and in the following sections.

\begin{lemma}\label{5}
Let $S$ be a numerical semigroup.
\begin{enumerate}[(a)]
\item $S$ is irreducible if and only if $S$ is maximal among all numerical semigroups with Frobenius number $\operatorname{F}(S)$.
\item If $h=max\big\{x\in \mathbb{N}\backslash S ~| ~ \operatorname{F}(S)-x\not\in S ~\text{and} ~ x\neq\frac{\operatorname{F}(S)}{2}\big\}$, then $S\cup\{h\}$ is a numerical semigroup with $\operatorname{F}(S\cup\{h\})=F(S)$ .
\item $S$ is irreducible if and only if $\big\{x\in \mathbb{N}\backslash S ~| ~  \operatorname{F}(S)-x\not\in S ~\text{and} ~ x\neq\frac{\operatorname{F}(S)}{2}\big\}$ is the empty set.
\item $S\cup \big\{x\in \mathbb{N}\backslash S ~| ~  \operatorname{F}(S)-x\not\in S ~\text{and} ~ x>\frac{\operatorname{F}(S)}{2}\big\}$ is an irreducible numerical semigroup.
\end{enumerate}
\end{lemma}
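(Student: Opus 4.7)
The plan is to handle the four parts in the order (a), (b), (c), (d), with (b) as the main technical step and (c), (d) following quickly once (a) and (b) are in hand. Write $F := \operatorname{F}(S)$ throughout, and recall that if $T$ is a numerical semigroup with $T \supseteq S$ then $\operatorname{F}(T) \leq F$, with equality exactly when $F \notin T$. For part (a), forward direction: if $S$ is maximal and $S = S_1 \cap S_2$ with each $S_i \supsetneq S$, maximality forces $\operatorname{F}(S_i) < F$ and thus $F \in S_i$ for each $i$, giving $F \in S_1 \cap S_2 = S$, absurd. Reverse direction: if $S$ is not maximal, take $T \supsetneq S$ with $\operatorname{F}(T) = F$; then $S \cup \{F\}$ is itself a numerical semigroup (every integer above $F$ already lies in $S$), strictly contains $S$, and satisfies $T \cap (S \cup \{F\}) = S$ because $F \notin T$, exhibiting $S$ as a nontrivial intersection.

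For part (b), the crucial observation is that the defining set is symmetric under $x \mapsto F - x$, so its maximum $h$ satisfies $F/2 < h < F$. To see that $S \cup \{h\}$ is closed under addition, fix $s \in S \setminus \{0\}$ and suppose for contradiction that $h + s \notin S$: necessarily $h + s \leq F$ (else $h + s \in S$), and $h + s > h > F/2$ so $h + s \neq F/2$; if $F - (h + s) \in S$, then adding $s$ would give $F - h \in S$, contradicting $h$'s membership in the set; hence $h + s$ itself lies in the set, contradicting the maximality of $h$. Finally, $h < F$ ensures the Frobenius number is unchanged.

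Parts (c) and (d) then follow. For (c), the direction ``set non-empty $\Rightarrow$ $S$ reducible'' is the contrapositive of (b) combined with (a); conversely, if $T \supsetneq S$ has $\operatorname{F}(T) = F$, any $x \in T \setminus S$ satisfies $F - x \notin S$ (else $F \in T$) and $x \neq F/2$ (else $2x = F \in T$), so $x$ lies in the set. For (d), let $S' := S \cup A$ with $A$ as stated; sums of two elements of $A$ exceed $F$ and so lie in $S$, while for $a \in A$ and $s \in S \setminus \{0\}$ with $a + s \leq F$ the implication chain from (b) again forces $a + s \in A$, so $S'$ is a numerical semigroup with Frobenius $F$. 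Irreducibility follows from (c) because a hypothetical $y$ in the exceptional set associated with $S'$ would have $y < F/2$ (else $F - y \in S \subseteq S'$, contradiction), giving $F - y > F/2$ and $F - y \notin S$, so $F - y \in A \subseteq S'$, again a contradiction.

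The main subtlety is the careful treatment of the $F/2$ exception in part (b): it is the strict inequality $h > F/2$, derived from the symmetry of the set, that prevents $h + s$ from coinciding with $F/2$ and thereby enables the maximality contradiction to close; the same symmetric structure is what makes the irreducibility step in (d) work.
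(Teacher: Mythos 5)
Your proof is correct. Note that the paper itself does not actually prove Lemma \ref{5}: it simply cites Lemmas 4 and 5 of Blanco--Rosales \cite{computer}, so there is no in-paper argument to compare against line by line. What you have written is a complete, self-contained, elementary reconstruction of the standard proofs from that reference, and every step checks out: in (a) the key device $S\cup\{\operatorname{F}(S)\}$ together with $T\cap(S\cup\{\operatorname{F}(S)\})=S$ is exactly the classical argument; in (b) you correctly identify the one genuinely delicate point, namely that the exceptional set is invariant under $x\mapsto \operatorname{F}(S)-x$ (its elements $x$ satisfy $0<x<\operatorname{F}(S)$ since $0\in S$ and $\operatorname{F}(S)-\operatorname{F}(S)=0\in S$), which forces $h>\operatorname{F}(S)/2$ and makes the maximality contradiction for $h+s$ close; and in (c) and (d) the case analysis around $\operatorname{F}(S)/2$ is handled correctly, including the verification that a gap $y$ of $S'$ with $\operatorname{F}(S)-y\notin S'$ cannot lie on either side of $\operatorname{F}(S)/2$. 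The only cosmetic remark is that in (b) one should say explicitly that the statement presupposes the exceptional set is nonempty (so that $h$ exists), but that is already implicit in the formulation of the lemma.
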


\begin{proof}
It is an immediate consequence of \cite[Lemmas 4 and 5]{computer}.
\end{proof}

\noindent\emph{Proof of Theorem \ref{6}.}
First, let us see that $U([S]) \in \mathscr I(m,F)$. If $U([S])$ is not irreducible then, by using Lemma \ref{5}, we have that there exists $h \in\mathbb{N}\backslash S$ such that  $\frac{F}{2}<h<F$ and $U([S])\cup\{h\}\in  \mathscr L(m,F)$. Besides, as $\theta\left(U([S])\cup\{h\}\right)=\theta\left(U([S])\right)=\theta\left(S\right)$, we have that $U([S])\cup\{h\}\in [S]$, in contradiction with the maximality of $U([S])$.  

Now, we prove that if $S'\in [S]\cap \mathscr I(m,F)$ then $S'=U([S])$. In fact, as $S'\in [S]$ then $S'\subseteq U([S])$. Since  both $S'$ and  $U([S])$ belong to $\mathscr I(m,F)$, from Lemma \ref{5}(a), we conclude that  $S'=U([S])$.

Finally, by applying Lemma \ref{5}(d), we get that  $S\cup \big\{x\in \mathbb{N} \backslash S ~| ~  F-x\not\in S ~\text{and} ~ x>\frac{F}{2}\big\}\in  \mathscr I(m,F)\cap [S]$. Therefore $S\cup \big\{x\in \mathbb{N}\backslash S ~| ~  F-x\not\in S ~\text{and} ~ x>\frac{F}{2}\big\}=U([S])$. \qed

\medskip
The following results are immediate consequences of Theorem \ref{6}.

\begin{corollary}
The map $\varphi : \mathscr L(m, F) \to \mathscr I(m,F)$ such that $S \mapsto U([S])$ is surjective and $\varphi(S) = \varphi(S')$ if and only if $\theta(S) = \theta(S').$ In particular, $\mathscr L(m, F)/\! \sim\, \cong \mathscr I(m,F)$.
\end{corollary}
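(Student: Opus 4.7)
The plan is to show that $\varphi$ is well-defined, surjective, has the stated fibre characterisation, and then read off the induced bijection. Throughout I assume the hypotheses of Theorem~\ref{6} ($m\geq 3$, $F>2m$, $m\nmid F$), under which $[S]$ admits the maximum $U([S])$ by Lemma~\ref{4} and this maximum lies in $\mathscr I(m,F)$ by Theorem~\ref{6}; this makes $\varphi$ well-defined into $\mathscr I(m,F)$.

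For surjectivity I would observe that $\mathscr I(m,F)\subseteq\mathscr L(m,F)$, so any $T\in\mathscr I(m,F)$ is a legitimate input to $\varphi$. Since $T\in[T]$ and Theorem~\ref{6} asserts $[T]\cap\mathscr I(m,F)=\{U([T])\}$, we must have $T=U([T])=\varphi(T)$. In particular $\varphi$ restricts to the identity on $\mathscr I(m,F)$, which trivially gives surjectivity.

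For the fibre characterisation, the forward direction is almost tautological: if $\theta(S)=\theta(S')$ then $S\sim S'$, hence $[S]=[S']$ and therefore $U([S])=U([S'])$, i.e.\ $\varphi(S)=\varphi(S')$. For the converse, suppose $\varphi(S)=\varphi(S')$. Because $U([S])\in[S]$ and $U([S'])\in[S']$, the definition of $\sim$ gives $\theta(U([S]))=\theta(S)$ and $\theta(U([S']))=\theta(S')$; combining these with $U([S])=U([S'])$ yields $\theta(S)=\theta(S')$, as required.

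The final statement $\mathscr L(m,F)/\!\sim\,\cong\mathscr I(m,F)$ is then immediate: the fibre characterisation tells us precisely that the equivalence classes modulo $\sim$ coincide with the fibres of $\varphi$, so $\varphi$ descends to a well-defined injection on the quotient, and surjectivity on $\mathscr I(m,F)$ has already been established. No step presents a genuine obstacle because the heavy lifting has been done in Lemma~\ref{4} and Theorem~\ref{6}; the only subtlety is to remember to check that $\varphi$ is defined on all of $\mathscr L(m,F)$ (not only on $\mathscr I(m,F)$), which is exactly what the existence of a maximum $U([S])$ in each class guarantees.
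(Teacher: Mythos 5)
Your proof is correct and follows exactly the route the paper intends: the corollary is stated there as an immediate consequence of Theorem~\ref{6} (with Lemma~\ref{4} supplying the existence of the maximum $U([S])$), and your verification of well-definedness, surjectivity via the fixed points on $\mathscr I(m,F)$, and the fibre characterisation via $U([S])\in[S]$ is precisely the omitted routine argument.
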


%Recall the set of classes modulo $\sim,\ \{ [S] ~|~ S\in \mathscr L(m, F)\}$ defines a (disjoint) partition of $\mathscr L(m, F)$.

\begin{corollary}\label{7}
Let $m$ and $F$ be  positive integers such that $m\geq 3$, $F>2m$ and  $m\nmid F$. Then $$\mathscr L(m,F)= \bigsqcup_{S\in  \mathscr I(m,F)}[S],$$ where $\sqcup$ means disjoint union. %Moreover if $\{S,S'\}\subseteq \mathscr I(m,F)$ and $S\neq S'$ then $[S]\cap [S']=\varnothing$.
\end{corollary}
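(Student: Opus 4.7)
The plan is almost purely bookkeeping: by definition $\sim$ is an equivalence relation on $\mathscr{L}(m,F)$, so the set automatically decomposes as the disjoint union of its equivalence classes. The only work is to re-index this decomposition, using $\mathscr{I}(m,F)$ rather than $\mathscr{L}(m,F)/\!\sim$ as the indexing set, which will follow directly from Theorem~\ref{6}.

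First I would write
\[
\mathscr{L}(m,F) \;=\; \bigsqcup_{C\, \in\, \mathscr{L}(m,F)/\sim}\! C,
\]
and then exhibit a canonical bijection between $\mathscr{L}(m,F)/\!\sim$ and $\mathscr{I}(m,F)$. By Theorem~\ref{6}, for every $S \in \mathscr{L}(m,F)$ one has $[S]\cap\mathscr{I}(m,F) = \{U([S])\}$, so sending each equivalence class $C$ to its unique irreducible representative is a well-defined, injective map $\mathscr{L}(m,F)/\!\sim \,\to\, \mathscr{I}(m,F)$. Surjectivity is just as direct: any $T\in\mathscr{I}(m,F)$ lies in $\mathscr{L}(m,F)$, hence in its own class $[T]$, and Theorem~\ref{6} applied to $S=T$ yields $U([T])=T$.

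Consequently, every equivalence class has the form $[T]$ for a unique $T\in\mathscr{I}(m,F)$, and re-indexing the disjoint union through this bijection produces
\[
\mathscr{L}(m,F) \;=\; \bigsqcup_{T\in\mathscr{I}(m,F)} [T],
\]
as claimed. The only point worth verifying explicitly is disjointness under the new indexing: if $T\neq T'$ in $\mathscr{I}(m,F)$ shared an element, then $[T]=[T']$, and Theorem~\ref{6} would force $T = U([T]) = U([T']) = T'$, a contradiction. I do not anticipate a substantive obstacle; the statement is essentially a reformulation of Theorem~\ref{6} combined with the standard fact that an equivalence relation partitions its underlying set.
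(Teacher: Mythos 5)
Your proof is correct and follows exactly the route the paper intends: the paper states this corollary as an immediate consequence of Theorem \ref{6} (via the bijection $\mathscr L(m,F)/\!\sim\ \cong \mathscr I(m,F)$ recorded in the preceding corollary), which is precisely the partition-plus-reindexing argument you spell out.
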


\begin{remark}\label{remAlg}
In view of Corollary \ref{7}, in order to determine explicitly the elements in the set $\mathscr L(m,F)$ we will need
\begin{itemize}
\item[1)] an algorithm to compute the set $\mathscr I(m,F)$
\item[2)] an algorithm to compute the set $[S]$, for each  $S\in \mathscr I(m,F)$.
\end{itemize}
These algorithms will be developed in Sections \ref{S5} and \ref{S4}, respectively.
\end{remark}

%%%%%%%%%%%%%%%%%%%%%%%%%%%%%%%%%%%%%%%%%%%%%%%%%%%%%%%%%%%
%%%%%%%%%%%%%%%%%%%%%%%%%%%%%%%%%%%%%%%%%%%%%%%%%%%%%%%%%%%
\section{An algorithm for the computation of $\mathscr I(m, F)$}\label{S5}

Our main goal in this section is to describe all the elements in $\mathscr I(m, F)$. To do that, we first describe the conditions that $m$ and $F$ must verify such that there exists at least one irreducible numerical semigroup with multiplicity $m$ and Frobenius number $F$.

\begin{lemma}\cite[Lemma 8]{IJAC}.\label{15}
Let $S$ be a numerical semigroup. Then $S$ is irreducible if and only if $\operatorname{g}(S) = \Big\lceil \frac{F(S)+1}{2} \Big\rceil$, being $\lceil - \rceil$ the ceiling operator.
\end{lemma}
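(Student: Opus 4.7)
The plan is to establish both directions of the equivalence via a pairing argument based on the involution $\sigma\colon x\mapsto F(S)-x$ acting on $\{0,1,\ldots,F(S)\}$, coupled with the characterisation of irreducibility provided by Lemma \ref{5}(c). Write $F=F(S)$ for brevity.

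First I would prove the lower bound $g(S)\geq \lceil (F+1)/2\rceil$ that holds for every numerical semigroup. The key observation is that if both $x$ and $F-x$ belonged to $S$, then $F=x+(F-x)\in S$, contradicting the definition of the Frobenius number. Hence each $\sigma$-orbit $\{x,F-x\}$ with $x\neq F-x$ contains at least one gap, and when $F$ is even the fixed point $F/2$ is forced to be a gap (since $F/2+F/2=F\notin S$). A quick enumeration in the two parity cases yields exactly $\lceil (F+1)/2\rceil$ guaranteed gaps, so $g(S)\geq \lceil (F+1)/2\rceil$.

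Next I would observe that equality $g(S)=\lceil (F+1)/2\rceil$ is equivalent to the statement that \emph{each} non-trivial orbit $\{x,F-x\}$ contributes \emph{exactly one} gap, i.e.\ for every $x\in\mathbb{N}\setminus S$ with $x\neq F/2$ one has $F-x\in S$. This is precisely the condition that the set
\[
\bigl\{x\in\mathbb{N}\setminus S ~|~ F-x\notin S \text{ and } x\neq F/2\bigr\}
\]
be empty, which by Lemma \ref{5}(c) characterises irreducibility. Chaining the two equivalences gives the lemma.

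The only mild subtlety is the parity split between $F$ odd and $F$ even: in the odd case the $(F+1)/2$ orbits $\{0,F\},\{1,F-1\},\ldots,\{(F-1)/2,(F+1)/2\}$ each force a gap; in the even case the $F/2$ non-trivial orbits plus the forced gap at $F/2$ together yield $F/2+1=\lceil(F+1)/2\rceil$. I expect this bookkeeping to be the only place that requires care, and the use of the ceiling in the statement is exactly what unifies the two cases.
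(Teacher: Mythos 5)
Your argument is correct and complete: the pairing $x\leftrightarrow F(S)-x$ on $\{0,\ldots,F(S)\}$ forces at least one gap per non-trivial orbit (plus the fixed point $F(S)/2$ when $F(S)$ is even), which gives the universal bound $\operatorname{g}(S)\geq\lceil (F(S)+1)/2\rceil$, and equality holds exactly when no orbit contains two gaps, i.e.\ when the set in Lemma \ref{5}(c) is empty. Note that the paper does not actually prove this lemma --- it is quoted from \cite[Lemma 8]{IJAC} --- but your proof is the standard one underlying that reference (it is in essence the symmetric/pseudo-symmetric dichotomy of Lemma \ref{20} expressed as a gap count), and the bookkeeping in both parity cases checks out.
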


\begin{proposition}\label{19}
Let $m$ and $F$ be  positive integers such that $F\geq 3$. Then $\mathscr I(m,F)\neq\varnothing$ if and only if $m\leq \frac{F+2}{2}$ and $m\nmid F$.
\end{proposition}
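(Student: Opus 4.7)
The plan is to prove both implications separately, leveraging Proposition \ref{1} for the divisibility condition and Lemma \ref{5} for the irreducibility analysis. The content is packaged neatly: irreducibility constrains the symmetry of the gap set about $F/2$, and the multiplicity forces $\{1,\ldots,m-1\}$ to all be gaps, so pairing these two observations via Lemma \ref{5}(c) yields the upper bound on $m$, while Lemma \ref{5}(d) provides a constructive proof of sufficiency.

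For the \emph{only if} direction, assume $S\in \mathscr I(m,F)$. Since $S\in \mathscr L(m,F)$, Proposition \ref{1} gives $m\nmid F$. For the bound $m\le (F+2)/2$, I use that $m-1$ is a gap of $S$ (being strictly below the multiplicity). If $m-1=F/2$, then $F=2(m-1)$, whence $m=(F+2)/2$ and the bound holds as an equality. Otherwise, Lemma \ref{5}(c) applied to the irreducible $S$ forces $F-(m-1)\in S$. I must rule out $F-(m-1)=0$: if $F=m-1$, then by Proposition \ref{3}(a) necessarily $S=\{0,m,\rightarrow\}$, which has genus $m-1$; but $F\ge 3$ gives $m\ge 4$, and Lemma \ref{15} requires $m-1=\lceil m/2\rceil$, which fails for $m\ge 4$, a contradiction. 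So $F-(m-1)$ is a positive element of $S$, hence $F-(m-1)\ge m$, yielding $F\ge 2m-1$ and therefore $m\le (F+1)/2 \le (F+2)/2$.

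For the \emph{if} direction, assume $m\le (F+2)/2$ and $m\nmid F$; equivalently, $F\ge 2m-2\ge m-1$ and $m\nmid F$, so by Proposition \ref{1} the minimal element $T:=\langle m\rangle\cup\{F+1,\rightarrow\}$ of $\mathscr L(m,F)$ is well defined. By Lemma \ref{5}(d),
\[ S := T\cup\bigl\{x\in \mathbb{N}\setminus T \,:\, F-x\notin T \text{ and } x>F/2\bigr\} \]
is an irreducible numerical semigroup. Its Frobenius number is still $F$: the element $F$ itself is not added (since $F-F=0\in T$), and everything beyond $F$ is already in $T$. Moreover, every added element $x$ satisfies $x>F/2\ge m-1$, hence $x\ge m$; since $m\in T\subseteq S$ and nothing positive below $m$ lies in $T$, the multiplicity of $S$ equals $m$. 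Thus $S\in \mathscr I(m,F)$.

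The main subtlety lies in the necessity direction, at the two boundary configurations $m-1=F/2$ and $F=m-1$: the former produces the sharp equality $m=(F+2)/2$, while the latter must be explicitly excluded using the hypothesis $F\ge 3$ together with Proposition \ref{3}(a) and Lemma \ref{15}. Once these edge cases are dispatched, both implications reduce to clean applications of Lemma \ref{5}.
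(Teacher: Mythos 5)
Your proof is correct, but the necessity direction takes a genuinely different route from the paper's. For sufficiency both arguments coincide: start from the minimal element $\langle m\rangle\cup\{F+1,\rightarrow\}$ of $\mathscr L(m,F)$ and apply Lemma \ref{5}(d); you add the (welcome) explicit verification that the irreducible closure keeps Frobenius number $F$ and multiplicity $m$, which the paper leaves implicit. For necessity, the paper counts gaps: $\{1,\ldots,m-1\}\cup\{F\}$ lies in $\mathbb{N}\setminus S$, so $m\le \operatorname{g}(S)$, and Lemma \ref{15} gives $\operatorname{g}(S)=\lceil\frac{F+1}{2}\rceil\le\frac{F+2}{2}$ --- two lines, no case analysis. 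You instead apply the symmetry characterization of irreducibility (Lemma \ref{5}(c)) to the single gap $m-1$, which forces $F-(m-1)\in S$ unless $m-1=F/2$, and then you must separately dispatch the boundary cases $m-1=F/2$ (yielding the sharp equality $m=(F+2)/2$) and $F-(m-1)=0$ (excluded via Proposition \ref{3}(a) and Lemma \ref{15}). Both edge cases are handled correctly, so the argument is sound; what the paper's genus count buys is the avoidance of exactly these two special configurations, while your version has the minor virtue of identifying explicitly which semigroups realize the extremal value $m=(F+2)/2$ (the pseudo-symmetric case with $F$ even) and of pinpointing the positive element $F-(m-1)\ge m$ that witnesses the inequality $F\ge 2m-1$ in all other cases.
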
 

\begin{proof}
Let $S \in \mathscr I(m,F)$. If $m|F$ then we have $F\in\langle m\rangle\subseteq S$, which is impossible. As $\{1,\ldots,m-1\}\cup\{F\}\in \mathbb{N}\backslash S$ it follows that $m\leq \operatorname{g}(S)$, from Lemma \ref{15} we get  that $m\leq \frac{F+2}{2}$. Conversely, it is clear that $S=\langle m\rangle\cup \{F+1,\rightarrow\}$ belongs to $\mathscr L(m,F)$. By using Lemma \ref{5}(d) we obtain that  $\overline{S}= S\cup \{x\in \mathbb{N}\backslash S ~| ~  F-x\not\in S ~\text{and} ~ x>\frac{F}{2}\}$ is in
$\mathscr I(m,F)$.
\end{proof}

It is well known (see for instance \cite {pacific}) that the class of irreducible numerical semigroups  is the disjoint union of the  two sub-classes of particular interest, which are called: \textbf{symmetric} and \textbf{pseudo-symmetric numerical semigroups} (see \cite {Kunz, froberg, barucci}). There are several characterizations for these class of numerical semigroups. The next result is one of many and it will be used extensively in what follows.

\begin{lemma}\cite[Proposition 4.4]{libro}.\label{20}
Let $S$ be a numerical semigroup. 
\begin{enumerate}[(a)]
\item  $S$ is symmetric if and only if $\operatorname{F}(S)$  is odd and $x\in \mathbb{N}\backslash S$ implies $\operatorname{F}(S)-x\in S$.
\item  $S$ is pseudo-symmetric if and only if $\operatorname{F}(S)$ is even and $x\in \mathbb{N}\backslash S$ implies that
either $\operatorname{F}(S)-x\in S$ or $x =\frac{\operatorname{F}(S)}{2}$.
\end{enumerate}
\end{lemma}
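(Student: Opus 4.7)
The plan is to derive both claims directly from Lemma \ref{5}(c) by splitting according to the parity of $F(S)$. The paragraph preceding the lemma identifies \emph{symmetric} numerical semigroups with the irreducible ones whose Frobenius number is odd, and \emph{pseudo-symmetric} ones with the irreducible ones whose Frobenius number is even. So proving (a) and (b) reduces to showing that, once the parity of $F(S)$ is fixed, irreducibility is equivalent to the condition on $\mathbb{N}\setminus S$ given in the statement. Rephrasing Lemma \ref{5}(c) contrapositively, $S$ is irreducible if and only if every $x \in \mathbb{N}\setminus S$ satisfies $F(S)-x \in S$ or $x = F(S)/2$.

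For part (a), I would assume $F(S)$ is odd. Then $F(S)/2$ is not an integer, so the disjunct ``$x = F(S)/2$'' in Lemma \ref{5}(c) cannot hold for any $x \in \mathbb{N}\setminus S$; consequently the characterisation from Lemma \ref{5}(c) collapses to ``$F(S)-x \in S$ for every $x \in \mathbb{N}\setminus S$'', which is exactly (a).

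For part (b), assume $F(S)$ is even. Here $F(S)/2$ is an integer and, crucially, $F(S)/2 \notin S$: otherwise $F(S) = F(S)/2 + F(S)/2$ would lie in $S$, contradicting the fact that $F(S)$ is the Frobenius number. Hence $F(S)/2$ is a genuine gap, and Lemma \ref{5}(c) reads exactly as (b). The whole argument is a bookkeeping step and I do not expect any real obstacle; the only subtlety worth highlighting is precisely the verification that $F(S)/2$ is a gap, which turns the ``or $x = F(S)/2$'' clause in (b) into a genuine alternative rather than a vacuous one.
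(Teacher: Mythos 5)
Your argument is correct; note that the paper does not actually prove Lemma \ref{20} but quotes it from \cite[Proposition 4.4]{libro}, and your derivation from Lemma \ref{5}(c) by splitting on the parity of $\operatorname{F}(S)$ (the odd case killing the clause $x=\operatorname{F}(S)/2$, the even case reproducing it verbatim) is exactly the standard argument behind that reference. The one point you should state explicitly is that you are taking ``symmetric'' (resp.\ ``pseudo-symmetric'') to \emph{mean} ``irreducible with odd (resp.\ even) Frobenius number''; this is indeed the convention the paper records, but only in the remark \emph{after} the lemma, so it must be invoked as the definition rather than read off as a consequence of the lemma being proved, or the argument becomes circular.
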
 

Note that a numerical semigroup  is symmetric (respectively pseudo-symmetric)  if it is irreducible with Frobenius number odd (respectively even).

\begin{definition}\label{ratio}
Given a numerical semigroup $S$, the smallest element in $S\left\backslash\langle \operatorname{m}(S)\right\rangle$ is called the \textbf{ratio} of $S$ and will be denoted by $\operatorname{r}(S)$.
\end{definition}

Notice that if $S$ is a numerical semigroup and $\operatorname{msg}(S)=\{n_1<n_2<\cdots <n_p\}$, then we have $n_1=\operatorname{m}(S)$ and $\operatorname{r}(S)=n_2$.

\begin{proposition}\label{21}
Let $m$ and $F$ be  positive integers. If $F\geq 3,\  m\leq \frac{F+2}{2}$ and  $m\nmid F$, then there exists a unique irreducible numerical semigroup $C(m,F)$ with Frobenius number $F$, multiplicity $m$ and ratio greater than $\frac{F}{2}$. Moreover, 
\begin{enumerate}[(a)]
\item  If $F$ is odd and $1 \leq r \leq m$ the smallest integer such that 
$\frac{F+1}{2}$ is congruent to $r$ modulo $m$, then  $C(m,F)$ is a numerical semigroup with minimal system of generators $\{m\}\cup \{\frac{F+1}{2}+x ~|~ x\in\{0,\ldots,m-1\}\backslash\{m-r,r-1\}\}$.
\item  If $F$ is even and $1 \leq r \leq m$ the smallest integer such that 
$\frac{F+2}{2}$ is congruent to $r$ modulo $m$, then $C(m,F)$ is a numerical semigroup with minimal system of generators $\{3,F/2+3,F+3\}$, if $m=3$, or $\{m\}\cup \{\frac{F+2}{2}+x ~|~ x\in\{0,\ldots,m-1\}\backslash\{m-r, r-2\}\}$, if $m \neq 3$.
\end{enumerate}
\end{proposition}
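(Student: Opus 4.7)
The strategy is to exploit the hypothesis $\operatorname{r}(S)>F/2$, which forces every non-multiple of $m$ in $S$ to exceed $F/2$; hence $S\cap[0,\lceil F/2\rceil)=\langle m\rangle\cap[0,\lceil F/2\rceil)$. Combined with Lemma~\ref{20}, the involution $x\mapsto F-x$ then determines $S$ on $(F/2,F)$, yielding uniqueness at once and supplying an explicit candidate for existence.

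For uniqueness in case (a) ($F$ odd, $S$ symmetric), Lemma~\ref{20}(a) gives $x\in S\iff F-x\notin S$ for $1\le x\le F-1$. For $x\in((F-1)/2,F-1]$ the partner $F-x$ lies below $F/2$, where $S$-membership reduces to divisibility by $m$; using $m\nmid F$, this yields $x\in S\iff x\not\equiv F\pmod m$. Case (b) with $F$ even is analogous via Lemma~\ref{20}(b); note $m\nmid F$ forces $m\nmid F/2$, so $F/2\notin S$ matches the pseudo-symmetric exception. For existence, define $S$ by this description and verify directly that it is closed under addition: sums of two elements above $F/2$ exceed $F$; mixed sums either exceed $F$ or inherit the $\not\equiv F\pmod m$ condition; and $F$ itself admits no decomposition $F=a+b$ in $S$ (two multiples of $m$ would force $m\mid F$, while a mixed pair $F=km+b$ would require $b\equiv F\pmod m$ with $b>F/2$, hence $b\notin S$). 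Multiplicity $m$, Frobenius $F$ and ratio $>F/2$ are immediate, and irreducibility follows from Lemma~\ref{20} by construction.

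To compute $\operatorname{msg}(S)$, note $m\in\operatorname{msg}(S)$ and examine the $m$ consecutive integers $N=\{(F+1)/2+x\mid 0\le x\le m-1\}$ in case (a), with $(F+2)/2$ in place of $(F+1)/2$ in case (b). Exactly one element of $N$ is $\equiv F\pmod m$ (a gap of $S$) and exactly one is $\equiv 0\pmod m$ (in $\langle m\rangle$, hence not a minimal generator); using $F\equiv 2r-1\pmod m$ (case (a)) and $F\equiv 2r-2\pmod m$ (case (b)), these correspond to the index pairs $\{r-1,m-r\}$ and $\{r-2,m-r\}$, respectively (and $m\nmid F$ forces $r\ne 1$ in case (b), so both indices lie in $\{0,\ldots,m-1\}$). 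The remaining $a\in N$ are minimal generators by the following argument: if $a=b+c$ with $b,c\in S\setminus\{0\}$ then $b,c\ge m$; either both are multiples of $m$ (forcing $a\in\langle m\rangle$, contradicting $x\ne m-r$) or one, say $c$, is a non-multiple, so $c\ge(F+1)/2$, giving $b=a-c\le x\le m-1<m$, a contradiction. Every element of $S\setminus\langle m\rangle$ at most $F$ then decomposes as a minimal generator plus a multiple of $m$; elements $a\equiv F\pmod m$ with $a\ge F+1$ satisfy $a=F+jm$ and reduce by induction on $j$ to the base case $a=F+m$, which is itself a sum of two middle generators whenever a pair $y_1+y_2=m-1$ (case (a)) or $y_1+y_2=m-2$ (case (b)) exists in the admissible set---a counting check shows this holds for all $m\ge 3$ in case (a) and all $m\ge 4$ in case (b).

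The genuine exception is case (b) with $m=3$: $\{0,1,2\}\setminus\{r-2,3-r\}$ is too small to accommodate $y_1+y_2=1$, so $F+3$ cannot be written as a sum of smaller elements of $S$ and must be adjoined as a third minimal generator; a direct verification confirms $\langle 3,F/2+3,F+3\rangle$ has multiplicity $3$, Frobenius $F$ and ratio $F/2+3>F/2$. The main obstacle in the proof is this minimal-generator bookkeeping---pinpointing the two indices to remove from $N$, checking minimality of the survivors and showing that together with $m$ they span all of $S$---with the $m=3$ case of (b) being genuinely exceptional and requiring its own direct inspection.
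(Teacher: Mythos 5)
Your argument is correct and follows essentially the same route as the paper's: the hypothesis $\operatorname{r}(S)>F/2$ forces $S$ to agree with $\langle m\rangle$ below $F/2$, and Lemma \ref{20} then determines $S$ on $(F/2,F)$ as the complement of the residue class of $F$ modulo $m$. The only notable differences are that you conclude uniqueness by determining $S$ completely on $[0,F]$ rather than via the paper's containment argument combined with the genus count of Lemma \ref{15}, and that you carry out the minimal-generating-set bookkeeping (identifying the excluded indices $m-r$ and $r-1$, resp.\ $r-2$, and explaining why $m=3$ is genuinely exceptional in case (b)) that the paper leaves implicit.
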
 

\begin{proof}
Let $S$ be an irreducible numerical semigroup such that $\operatorname{F}(S)=F$, $\operatorname{m}(S)=m$ and $\operatorname{r}(S)>\frac{F}{2}$.

If $F$ is odd.  It is clear that $\{\frac{F+1}{2}-1,\ldots,\frac{F+1}{2}-m\}\backslash\{\frac{F+1}{2}-r\}\subseteq \mathbb{N}\backslash S$ and by applying (a) in  Lemma \ref {20} we deduce that $\{\frac{F+1}{2},\ldots,\frac{F+1}{2}+m-1\}\backslash\{\frac{F+1}{2}+r-1\}\subseteq S.$ Consequently, $S$ contains $C(m,F)$. In order to prove the equality, it suffices to show that both have the same genus and, by Lemma \ref{15}, it is enough to prove that $C(m,F)$ is irreducible. Since $F$ is odd, this is the same as $C(m,F)$ to be a symmetric numerical semigroup. If $x>\frac{F}{2}$ and $x\not\in C(m,F)$ and considering the set of system of generators of $C(m,F)$ we deduce that $x=\frac{F+1}{2}+r-1+k m$ for some $k\in\mathbb{N}$ and thus $F-x=\frac{F+1}{2}-r-k m$. Since $\frac{F+1}{2}$ is $r$ modulo $m$, we obtain $F-x\in \langle m\rangle\subseteq C(m,F)$. From Lemma \ref{20}, we can guarantee that $C(m,F)$ is a symmetric numerical semigroup.

Suppose now that $F$ is even. Clearly $C(3,F)$ is the numerical semigroup with minimal system of generators $\{3,F/2+3,F+3\}.$ So, we assume $m > 3$. Then $\{\frac{F}{2}-1,\ldots,\frac{F}{2}-m\}\backslash\{\frac{F}{2}-(r-1)\}\subseteq \mathbb{N}\backslash S$. By using (b) in  Lemma \ref{20} we get that  $\{\frac{F+2}{2},\ldots,\frac{F+2}{2}+m-1\}\backslash\{\frac{F+2}{2}+r-2\}\subseteq S$ and thus  $S$ contains $C(m,F)$. To conclude the proof, we need to prove that $C(m,F)$ is irreducible, as in the previous case. To see this, it suffices to check that  $C(m,F)$ is a pseudo-symmetric numerical semigroup. If $x>\frac{F}{2}$ and  $x\not\in C(m,F)$ we get that $x=\frac{F}{2}+r-1+km$ for some $k\in\mathbb{N}$ and so $F-x=\frac{F+2}{2}-r- k m$ is a multiple of $m$. Consequently, $F-x\in\langle m\rangle\subseteq C(m,F)$. By applying Lemma \ref{20}(b), we have that $C(m,F)$ is a pseudo-symmetric numerical semigroup.
\end{proof}

The following result is a sufficient condition for a numerical semigroup to be irreducible.

\begin{proposition}\cite[Proposition 2.5]{forum}.\label{17}
Let $S$ be an irreducible numerical semigroup with Frobenius number $F$. If $x\in \operatorname{msg}(S)$ verifies $x<F$, $2x-F\not\in S$,  $3x\neq 2 F$ and  $4x\neq 3F$, then $(S\backslash \{x\})\cup \{F - x\}$ is an irreducible numerical semigroup with Frobenius number $F$.
\end{proposition}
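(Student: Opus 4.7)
The plan is to verify three things for $S' := (S \setminus \{x\}) \cup \{F - x\}$: (i) $S'$ is a numerical semigroup, (ii) $\operatorname{F}(S') = F$, and (iii) $S'$ is irreducible. As preliminaries, $T := S \setminus \{x\}$ is a numerical semigroup by Lemma \ref{9}; moreover, since $S$ is irreducible the involution $y \mapsto F - y$ on $[0,F]$ pairs gaps of $S$ with non-gaps (the only fixed point $F/2$, if relevant, lies in the gap set by Lemma \ref{20}(b)), so from $x \in \operatorname{msg}(S) \subseteq S$ and $0 < x < F$ we deduce $F - x \notin S$. Hence $S'$ strictly enlarges $T$ by the new element $F-x$.

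For (i), the closure under addition splits into three cases. First, if $a, b \in T$, then $a + b \in S$; if $a + b = x$ with both summands positive, then $x$ admits a non-trivial decomposition in $S$, contradicting $x \in \operatorname{msg}(S)$, while if one summand is zero the other equals $x$, contradicting membership in $T$, so $a + b \in T$. Second, for $b \in T$ with $b > 0$, set $y = (F - x) + b$: if $y > F$ then $y \in S$ and $y > x$, so $y \in T$; if $y = F$ then $b = x$, impossible; if $y < F$ then $b < x$, and $x - b \in S$ would give $x = (x - b) + b$ as a non-trivial decomposition in $S$, contradicting $x \in \operatorname{msg}(S)$. Hence $x - b \notin S$, and Lemma \ref{20} yields $y = F - (x - b) \in S$; moreover $y \neq x$, for otherwise $b = 2x - F$ would lie in $T \subseteq S$, violating $2x - F \notin S$. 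Third, for $z = 2(F - x)$: if $z > F$ then $z \in T$ as before; $z = F$ forces $x = F/2 \in S$, excluded as noted; if $z < F$, then $F - z = 2x - F \notin S$, so Lemma \ref{20} gives $z \in S$. The remaining exclusions are precisely $4x \neq 3F$ (ensuring $z \neq F/2$, which avoids the exceptional pseudo-symmetric gap) and $3x \neq 2F$ (ensuring $z \neq x$), so $z \in T$.

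For (ii), the gap set of $S'$ is $\bigl((\mathbb{N} \setminus S) \cup \{x\}\bigr) \setminus \{F - x\}$, and since $x < F$ and $F \neq F - x$ the largest element of this set remains $F$, so $\operatorname{F}(S') = F$. For (iii), the swap $F - x \leftrightarrow x$ preserves cardinality of the gap set, so $\operatorname{g}(S') = \operatorname{g}(S) = \lceil (F+1)/2 \rceil$ by Lemma \ref{15}, and the converse direction of Lemma \ref{15} yields that $S'$ is irreducible. The main obstacle in the whole argument is the bookkeeping of step (i): one must match each of the four hypotheses $x < F$, $2x - F \notin S$, $3x \neq 2F$, $4x \neq 3F$ to exactly one ``non-generic'' configuration ($F - x > 0$, $y = x$, $z = x$, $z = F/2$) and verify that no other obstruction arises, especially in the pseudo-symmetric case where $F/2$ behaves exceptionally.
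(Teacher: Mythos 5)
The paper does not actually prove this proposition---it is imported verbatim from \cite[Proposition 2.5]{forum}---so your argument has to be judged on its own terms. On those terms it follows the natural route (closure of $T\cup\{F-x\}$ under the three types of pairwise sums, then genus counting via Lemma~\ref{15}) and is almost complete, but one sub-case is not closed. In the second case of step (i), with $b\in T$, $b>0$ and $y=(F-x)+b<F$, you correctly obtain $x-b\notin S$ and then assert that Lemma~\ref{20} yields $y=F-(x-b)\in S$. That is valid when $F$ is odd, but when $S$ is pseudo-symmetric Lemma~\ref{20}(b) only gives the disjunction ``$F-(x-b)\in S$ or $x-b=F/2$'', and in the second alternative $y=F/2\notin S$, so closure would genuinely fail there. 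Your closing sentence claims that each hypothesis is matched to exactly one non-generic configuration and that no other obstruction arises; this configuration is a fifth one, and none of the four you list covers it.

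The gap is real but easily filled, using the same hypothesis you have already invoked twice: if $x-b=F/2$ then $2x-F=2b$, and $2b\in S$ because $b\in S\setminus\{0\}$, contradicting $2x-F\notin S$. With that line inserted the proof is correct: the remaining cases of (i) are handled properly (in particular $3x\neq 2F$ and $4x\neq 3F$ are deployed exactly where they are needed for $z=2(F-x)$, namely to rule out $z=x$ and $z=F/2$), step (ii) is immediate since $x<F$, and the observation in (iii) that the swap $x\leftrightarrow F-x$ preserves the genus, combined with the ``if and only if'' in Lemma~\ref{15}, is a clean way to obtain irreducibility of $S'$ without re-examining its gap set.
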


\begin{corollary}\label{18}
If $S$ is an irreducible numerical semigroup such that  $\operatorname{r}(S)< \frac{\operatorname{F}(S)}{2}$, then $$\overline{S}=\left(S\backslash \{\operatorname{r}(S)\}\right)\cup \{F - \operatorname{r}(S)\}$$ is an irreducible numerical semigroup with $\operatorname{F}(\overline{S})=\operatorname{F}(S)$ and $\operatorname{r}(\overline{S})>\operatorname{r}(S)$.
\end{corollary}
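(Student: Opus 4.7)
The plan is to reduce this corollary directly to Proposition \ref{17} applied to $x = \operatorname{r}(S)$, and then verify the ratio statement by hand. Write $F = \operatorname{F}(S)$ and $m = \operatorname{m}(S)$ throughout.

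First I would check the four hypotheses of Proposition \ref{17} for $x = \operatorname{r}(S)$. By the observation following Definition \ref{ratio}, $\operatorname{r}(S) \in \operatorname{msg}(S)$. Since $\operatorname{r}(S) < F/2$, we immediately get $\operatorname{r}(S) < F$. Moreover $2\operatorname{r}(S) - F < 0$, so trivially $2\operatorname{r}(S) - F \notin S$; and the strict inequalities $3\operatorname{r}(S) < \tfrac{3}{2}F < 2F$ and $4\operatorname{r}(S) < 2F < 3F$ give $3\operatorname{r}(S) \neq 2F$ and $4\operatorname{r}(S) \neq 3F$. Proposition \ref{17} therefore guarantees that $\overline{S} = (S \setminus \{\operatorname{r}(S)\}) \cup \{F - \operatorname{r}(S)\}$ is an irreducible numerical semigroup with $\operatorname{F}(\overline{S}) = F$.

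Next I would check that $\operatorname{m}(\overline{S}) = m$. Indeed, $m \in S$ and $m < \operatorname{r}(S)$ (by definition of the ratio), so $m$ is still in $\overline{S}$. The element we added, $F - \operatorname{r}(S)$, satisfies $F - \operatorname{r}(S) > F/2 > \operatorname{r}(S) > m$, so it does not affect the minimum. Hence $\operatorname{m}(\overline{S}) = m$, and it is meaningful to compare $\operatorname{r}(\overline S)$ with $\operatorname{r}(S)$.

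Finally, to see $\operatorname{r}(\overline{S}) > \operatorname{r}(S)$, I would observe that every element of $\overline{S} \setminus \langle m \rangle$ is strictly larger than $\operatorname{r}(S)$: any element of $(S \setminus \{\operatorname{r}(S)\}) \setminus \langle m \rangle$ lies in $S \setminus \langle m \rangle$ and differs from $\operatorname{r}(S)$, hence exceeds it by minimality of $\operatorname{r}(S)$; and the newly adjoined element $F - \operatorname{r}(S)$ is itself greater than $F/2 > \operatorname{r}(S)$. Taking the minimum of this set yields $\operatorname{r}(\overline{S}) > \operatorname{r}(S)$, completing the proof. The only subtle point — not really an obstacle — is to remember to confirm that $F - \operatorname{r}(S)$ is not a smaller ``new'' ratio, which is immediate from $\operatorname{r}(S) < F/2$.
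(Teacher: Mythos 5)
Your proposal is correct and follows essentially the same route as the paper: both reduce the statement to Proposition \ref{17} applied to $x=\operatorname{r}(S)$, using $2\operatorname{r}(S)<F$ to verify $2\operatorname{r}(S)-F\notin S$, $3\operatorname{r}(S)\neq 2F$, $4\operatorname{r}(S)\neq 3F$, and then derive $\operatorname{r}(\overline{S})>\operatorname{r}(S)$ from $F-\operatorname{r}(S)>\operatorname{r}(S)$. Your explicit check that $\operatorname{m}(\overline{S})=\operatorname{m}(S)$ is a small extra detail the paper leaves implicit, but it does not change the argument.
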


\begin{proof}
By hypothesis, $\operatorname{r}(S) < 2\operatorname{r}(S) < \operatorname{F}(S)$, $3\operatorname{r}(S)\neq 2 F$ and  $4\operatorname{r}(S)\neq 3F$. In particular, $2\operatorname{r}(S)-\operatorname{F}(S)<0$. So $2\operatorname{r}(S)- F\not\in S$ and, by Proposition \ref{17}, we can conclude that $\overline{S}$ is an irreducible numerical semigroup with $\operatorname{F}(\overline{S})=\operatorname{F}(S)$. Furthermore, we have  $\operatorname{r}(\overline{S})>\operatorname{r}(S)$  due to $\operatorname{F}(S)-\operatorname{r}(S)>\operatorname{r}(S)$.
\end{proof}

Consider the following binary relation on $\mathscr I(m,F):\ T \preceq S$ if and only if $T = S$ or $\operatorname{r}(T) < F/2$ and $S = \left(T \backslash \{\operatorname{r}(T)\}\right)\cup  \{F-\operatorname{r}(T)\}$. 

\begin{proposition}\label{28b}
For each $S\in \mathscr I(m,F)$ there exists a subset $\{S_0, \ldots, S_{n}\}$ of $\mathscr I(m,F)$, such that $S = S_0 \prec \ldots \prec S_{n-1} \prec S_n = C(m,F)$.
\end{proposition}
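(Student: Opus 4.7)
The plan is to build the chain by iteratively applying Corollary \ref{18} to $S$, with the uniqueness part of Proposition \ref{21} serving as the stopping criterion.

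First I would dispatch the base case. If $r(S) > F/2$, then the uniqueness in Proposition \ref{21} forces $S = C(m,F)$, and the trivial chain of length $n=0$ suffices. Otherwise I argue that $r(S) < F/2$ strictly: the equality $r(S) = F/2$ is ruled out for free when $F$ is odd, and when $F$ is even the irreducible semigroup $S$ is pseudo-symmetric, so Lemma \ref{20}(b) gives $F/2 \notin S$ while $r(S) \in S$.

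The inductive step is then immediate: with $r(S) < F/2$, Corollary \ref{18} produces $\overline{S} = (S \setminus \{r(S)\}) \cup \{F - r(S)\}$, an irreducible numerical semigroup with Frobenius number $F$ and ratio strictly greater than $r(S)$. I would check by hand that $\overline{S}$ also has multiplicity $m$: the removed element $r(S)$ strictly exceeds $m$ by the definition of ratio (since $m \in \langle m\rangle$ while $r(S) \notin \langle m\rangle$), and the added element satisfies $F - r(S) > F/2 > m$ in this regime, because $m < r(S) < F/2$ forces $F > 2m$. By the defining clause of $\prec$, this gives $S \prec \overline{S}$ in $\mathscr{I}(m,F)$. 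Setting $S_0 = S$ and $S_{i+1} = \overline{S_i}$ as long as $r(S_i) < F/2$, I obtain a chain $S_0 \prec S_1 \prec \cdots$ in $\mathscr{I}(m,F)$ whose ratios form a strictly increasing sequence of positive integers bounded above by $F$. The chain must therefore terminate at some $S_n$ with $r(S_n) > F/2$, and the base case identifies $S_n = C(m,F)$.

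The hard part is not deep — it is the bookkeeping required to verify at every step that irreducibility, Frobenius number, multiplicity, and the strict inequality $r(\cdot) \neq F/2$ are all preserved under the operation $\overline{(\cdot)}$. Corollary \ref{18} provides the first two invariants directly; the multiplicity is kept by the inequality $F - r(S) > m$ in the inductive regime; and $r(S) \neq F/2$ comes from the parity split via Lemma \ref{20}. Once these invariants are in place, termination is automatic from the strict monotonicity of the ratios $r(S_i)$.
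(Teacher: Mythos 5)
Your proof is correct and follows essentially the same route as the paper's: iterate Corollary \ref{18} to strictly increase the ratio, terminate by monotonicity, and identify the endpoint with $C(m,F)$ via the uniqueness in Proposition \ref{21}. The extra bookkeeping you supply (ruling out $\operatorname{r}(S)=F/2$ and checking that the multiplicity is preserved, which Corollary \ref{18} does not assert) is a welcome tightening of details the paper leaves implicit, but it does not change the argument.
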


\begin{proof}
If $\operatorname{r}(S) > F/2,$ then $S = S_0 = C(m,F)$ by Proposition \ref{21}. Otherwise, by Corollary \ref{18}, the exists $S_1 \in \mathscr I(m,F)$ such that $S = S_0 \prec S_1$ and $\operatorname{r}(S_1) > \operatorname{r}(S)$. By repeating this argument with $S_1$, we will obtain either $S_1 = C(m,F)$ or $S_2 \succ S_1$ with $\operatorname{r}(S_2) > \operatorname{r}(S_1)$. Since this process cannot continue indefinitely, our claim follows.
\end{proof}

\begin{example}\label{22}
Let $S=\langle 6,8,9\rangle=\{0,6,8,9,12,14,15,16,17,18,20,\rightarrow\}$ is an irreducible numerical semigroup with $\operatorname{m}(S)=6$ and $\operatorname{F}(S)=19$.
\begin{itemize}
\item[] $S_1=\left(S_0\backslash\{8\}\right)\cup\{11\}=\langle 6,9,11,14,16\rangle$,
\item[] $S_2=\left(S_1\backslash\{9\}\right)\cup\{10\}=\langle 6,10,11,14,15 \rangle=C(6,19)$.
\end{itemize}
Clearly, $S \prec S_1 \prec S_2 = C(6,19)$.
\end{example}

\begin{theorem}\label{23}
Let $m$ and $F$ be  positive integers. If $F\geq 3$,  $m\leq \frac{F+2}{2}$ and $m\nmid F$, then set of elements $T$ with $T \prec S$ in $\mathscr I(m,F)$, is equal to $\{\big(S\backslash \{x\}\big)\cup \{F-x\}~| ~ x\in \alpha(S)\}$ where
 $$\alpha(S) := \left\{x\in\operatorname{msg}(S) \left| \begin{array}{ccc} \frac{F}{2}<x<F,\ 2x-F\not\in S\\ ~ 3x\neq 2F,\ 4x\neq 3F\\ \operatorname{m}(S)<F-x< \operatorname{r}(S) \end{array} \right. \right\}.$$
\end{theorem}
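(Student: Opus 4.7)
The plan is to prove the two set containments separately, leaning on two tools: Proposition \ref{17} (producing $T=(S\setminus\{x\})\cup\{F-x\}$ from $S$ under explicit arithmetic hypotheses) and Corollary \ref{18} (guaranteeing $\operatorname{r}(S)>\operatorname{r}(T)$ whenever $T\prec S$). The operation ``remove $x$, insert $F-x$'' is the common thread in both directions; the task is to match its validity with the conditions packaged into $\alpha(S)$.

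For the $\supseteq$ inclusion, given $x\in\alpha(S)$ I set $T:=(S\setminus\{x\})\cup\{F-x\}$. The five clauses $x\in\operatorname{msg}(S)$, $x<F$, $2x-F\notin S$, $3x\neq 2F$, $4x\neq 3F$ are exactly the hypotheses of Proposition \ref{17}, so $T$ is an irreducible numerical semigroup with Frobenius number $F$. To pin down the multiplicity and ratio, observe that $x\in S$ and $x+(F-x)=F\notin S$ force $F-x\notin S$, hence $F-x\notin\langle m\rangle$. Combined with $m<F-x$ and $F-x<\operatorname{r}(S)$ this shows $\operatorname{m}(T)=m$ and makes $F-x$ the smallest element of $T\setminus\langle m\rangle$, so $\operatorname{r}(T)=F-x<F/2$. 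The identity $S=(T\setminus\{F-x\})\cup\{x\}$ is immediate from the construction, so $T\prec S$ and $T\in\mathscr{I}(m,F)$.

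For the $\subseteq$ inclusion, given $T\prec S$ I set $y:=\operatorname{r}(T)$ and $x:=F-y$, so $0<y<F/2<x<F$ and $S=(T\setminus\{y\})\cup\{x\}$. The key preliminary step is to show $x\notin T$: otherwise the right-hand side collapses to $T\setminus\{y\}$, which has Frobenius number $F$ and genus $\operatorname{g}(T)+1=\lceil(F+1)/2\rceil+1$, contradicting the irreducibility of $S$ by Lemma \ref{15}. Hence $T=(S\setminus\{x\})\cup\{F-x\}$, and the clauses of $\alpha(S)$ fall out one by one from closure of $T$ under addition: $x\in\operatorname{msg}(S)$ since any nontrivial factorization $x=a+b$ in $S$ places $a,b\in T$ and hence $x\in T$; $2x-F\notin S$ since $F-2y\in S$ together with $F-2y\neq x$ (which holds as $y\neq 0$) would give $F-2y\in T$ and thus $(F-2y)+y=x\in T$; and $3x\neq 2F$, $4x\neq 3F$ because $x=2y$ or $x=3y$ would immediately place $x$ in $T$. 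Finally, $m<F-x$ follows from $y>\operatorname{m}(T)=m$ and $F-x<\operatorname{r}(S)$ is exactly the conclusion of Corollary \ref{18} applied to $T$.

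The main obstacle is the step $x\notin T$ in the converse direction, which is the one place where a purely combinatorial closure argument does not suffice and the genus characterization of irreducibility from Lemma \ref{15} is essential. Once this is secured, every arithmetic clause in $\alpha(S)$ becomes a direct closure obstruction inside $T$, and the two constraints $m<F-x<\operatorname{r}(S)$ merely record that the swap preserves the multiplicity while strictly increasing the ratio.
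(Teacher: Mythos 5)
Your proof is correct and follows essentially the same route as the paper: Proposition \ref{17} plus the bounds $\operatorname{m}(S)<F-x<\operatorname{r}(S)$ for the inclusion $\supseteq$, and closure arguments inside $T$ together with Corollary \ref{18} (for $F-x<\operatorname{r}(S)$) for the inclusion $\subseteq$. The only divergence is the step you single out as the main obstacle, namely $x\notin T$: your genus argument via Lemma \ref{15} is valid but unnecessary, since closure already gives it in one line ($\operatorname{r}(T)\in T$ and $\operatorname{r}(T)+(F-\operatorname{r}(T))=F\notin T$, hence $x=F-\operatorname{r}(T)\notin T$), which is what the paper implicitly uses.
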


\begin{proof}
Let $S\in \mathscr I(m,F)$ and $x\in\operatorname{msg}(S)$ such that  $\frac{F}{2}<x<F$,  $2x-F\not\in S$,  $3x\neq 2F$, $4x\neq 3F$ and $\operatorname{m}(S)<F-x< \operatorname{r}(S)$. By Proposition \ref{17} we know that $T=\left(S\backslash \{x\}\right)\cup  \{F-x\}$ is an irreducible numerical semigroup with Frobenius number $F$. Furthermore,  since $\operatorname{m}(S)<F-x< \operatorname{r}(S)$   we obtain $\operatorname{m}(T)=m$ and $\operatorname{r}(T)=F-x < F/2$. Hence, we have 
$S=\left(T\backslash \{\operatorname{r}(T\}\right)\cup  \{F-\operatorname{r}(T)\}$ and so $T \prec S$.

Now, let $S$ and $T \in \mathscr I(m,F)$ such that $T \prec S$. Since, $\operatorname{r}(T)<\frac{F}{2}$ and $S=\left(T\backslash \{\operatorname{r}(T)\}\right)\cup  \{F-\operatorname{r}(T)\}$, then $T=\left(S\backslash \{F-\operatorname{r}(T)\}\right)\cup  \{F-\left(F-\operatorname{r}(T)\right)\}$. To conclude the proof it is enough to see that $F-\operatorname{r}(T)\in \operatorname{msg}(S)$ verifies the conditions given in the definition of $\alpha(S)$. Clearly $F-\operatorname{r}(T)\in \operatorname{msg}(S)$ because $F-\operatorname{r}(T)\not\in T$. Since $\operatorname{r}(T)<\frac{F}{2}$ we obtain $\frac{F}{2}<F-\operatorname{r}(T)<F$. Moreover, as $2\operatorname{r}(T)\in T\backslash \{\operatorname{r}(T)\}$ then $2\operatorname{r}(T)\in S$ and thus $F-2\operatorname{r}(T)\not\in S$. Consequently, $2\left(F-\operatorname{r}(T)\right)\not\in S$. If $3\left(F-\operatorname{r}(T)\right)=2F$ we would obtain that $F=3\operatorname{r}(T)\in S$, which is impossible.  Furthermore, if $4\left(F-\operatorname{r}(T)\right)=3F$ we would get that $F=4\operatorname{r}(T)\in S$, which is impossible.  Finally, since $S=\left(T\backslash \{\operatorname{r}(T)\}\right)\cup  \{F-\operatorname{r}(T)\}$ and $F-\operatorname{r}(T)>\operatorname{r}(T)$ then $\operatorname{m}(S)<\operatorname{r}(T)<\operatorname{r}(S)$.
\end{proof} 

Now, we are ready to formulate our algorithm that will allow us to compute the set $\mathscr I(m,F)$.

\begin{algorithm}\label{24}\mbox{}\par
\textsc{Input:}  $m$ and $F$ be  positive integers with $F\geq 3$,  $m\leq \frac{F+2}{2}$ and $m\nmid F$.\par
\textsc{Output:} The set $\mathscr I(m,F)$.
\begin{itemize}
\item[1.]  Set $A:=\{C(m,F) \}$ and $\mathscr I(m,F) := A$.
\item[2.]  While $A \neq \varnothing$ do.
\begin{itemize}
\item[2.1.]  For $S\in A$ 
\begin{itemize}
\item[2.1.1.] Compute $\alpha(S)$.
\item[2.1.2.] Set $E(S):=\{\left(S\backslash \{x\}\right)\cup  \{F-x\} ~|~x\in\alpha (S)\}$.
\end{itemize}
\item[2.2.] Set $A:= \{E(S) \mid S \in A\}$ and $\mathscr I(m,F) := \mathscr I(m,F) \cup A$.
\end{itemize}
\item[3.] Return $\mathscr I(m,F)$.
\end{itemize}
\end{algorithm}

This algorithm computes the set $\mathscr I(m,F)$ starting from $C(m,F)$ whose existence is proved in Proposition \ref{21}. The correctness of the algorithm relies on Theorem \ref{23}.

We end this section by exploring the tree structure of $\mathscr I(m,F)$ which is somehow the structure of our algorithm and by analyzing a GAP \cite{gap} implementation of our algorithm.

%%%%%%%%%%%%%%%%%%%%%%%%%%%%%%%%%%%%%%%%%%%%%%%%%%%%%%%%%%%
\subsection*{The tree of $\mathscr I(m,F)$}

A graph $G$ is a pair $(V ,E)$, where $V$ is a nonempty set whose elements are called vertices, and $E$ is a subset of $\{(v,w)\in V\times V ~|~ v\neq w\}$.  The elements of $E$ are called edges of $G$. A path of length $n$ connecting the vertices $v$ and $w$ of $G$ is a sequence of distinct edges of the form $(v_0,v_1)$, $(v_1,v_2)$,$\ldots$, $(v_{n-1},v_n)$ with $v_0 = v$ and $v_n = w$. In this case, $w$ is said to  be a child of $v$.

A graph $G$ is a tree if there exists a vertex $r$ (known as the root of $G$) such that for every other vertex $v$ of $G$, there exist a unique path connecting $v$ and $r$. 

Let $m$ and $F$ positive integers such that $F \geq 3, m \leq \frac{F+2}2$ and $m \nmid F$. Let $G(\mathscr I(m,F))$ be the graph with vertex set equal to $\mathscr I(m,F)$ and such that $(S,T) \in \mathscr I(m,F) \times \mathscr I(m,F)$ is an edge if and only if $S \prec T$ 

\begin{corollary}
The graph $G(\mathscr I(m,F))$ is a tree with root equal $C(m,F)$.  
\end{corollary}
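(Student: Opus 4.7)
The plan is to verify the two defining properties of a rooted tree for $G(\mathscr I(m,F))$ with root $C(m,F)$: every vertex admits a path to $C(m,F)$, and that path is unique. Existence is essentially Proposition \ref{28b}, which supplies a chain $S = S_0 \prec S_1 \prec \cdots \prec S_n = C(m,F)$; this is precisely the sequence of edges $(S_0, S_1), (S_1, S_2), \ldots, (S_{n-1}, S_n)$ of a path in $G(\mathscr I(m,F))$.

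For uniqueness I would show that every vertex $S \neq C(m,F)$ has a unique $\prec$-successor in $\mathscr I(m,F)$. Two preliminary observations handle this. First, $\operatorname{r}(S) \neq F/2$: otherwise $2\operatorname{r}(S) = F$ would lie in $S$, contradicting $\operatorname{F}(S) = F$. Second, by Proposition \ref{21}, the semigroup $C(m,F)$ is the only element of $\mathscr I(m,F)$ whose ratio exceeds $F/2$, so any $S \neq C(m,F)$ satisfies $\operatorname{r}(S) < F/2$. Corollary \ref{18} then produces $\overline{S} = \big(S\setminus\{\operatorname{r}(S)\}\big)\cup \{F-\operatorname{r}(S)\} \in \mathscr I(m,F)$ with $S \prec \overline{S}$, and the very definition of $\prec$ shows that $\overline{S}$ is the only element of $\mathscr I(m,F)$ with this property.

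With successors unique, any path $(v_0, v_1), (v_1, v_2), \ldots, (v_{n-1}, v_n)$ starting at $v_0 = S$ must have $v_1 = \overline{v_0}$, $v_2 = \overline{v_1}$, and so on, so the entire vertex sequence is forced by the starting point. Since $C(m,F)$ is the only vertex with no $\prec$-successor, such a path can only terminate at $C(m,F)$, and Proposition \ref{28b} guarantees it does terminate. Putting these together yields the desired unique path from $S$ to $C(m,F)$. I expect the main subtlety to be nothing more elaborate than the uniqueness-of-successor step, which itself rests squarely on the uniqueness clause of Proposition \ref{21}; everything else is bookkeeping against the definitions.
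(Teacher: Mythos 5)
Your proof is correct and follows essentially the same route as the paper: existence of the path via Proposition \ref{28b}, and uniqueness via the fact that the definition of $\prec$ assigns each $S\neq C(m,F)$ exactly one successor. Your write-up is in fact slightly more careful than the paper's, since you propagate the unique-successor property along the whole path rather than only at the starting vertex.
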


\begin{proof}
Let $S \in \mathscr I(m,F)$. By Proposition \ref{28b}, there exists a path from $S$ to $C(m,F)$. If there exists another path from $S$ to $C(m,F))$, then there are $S'$ and $S''$ such that $S \prec S'$ and $S \prec S''$ but this is not possible by the definition of $\prec$.
\end{proof}

\begin{corollary}
If $(S,T)$ is an edge of $G(\mathscr I(m,F))$, then $S \not\subset T$ and $T \not\subset S$.
\end{corollary}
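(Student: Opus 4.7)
The plan is to exploit the explicit description of the edge relation $\prec$ and show that each of the two sets involved contains an element absent from the other. Unwinding definitions, an edge $(S,T)$ of $G(\mathscr I(m,F))$ corresponds to $S \prec T$, which by definition means $\operatorname{r}(S) < F/2$ and $T = \bigl(S \setminus \{\operatorname{r}(S)\}\bigr) \cup \{F-\operatorname{r}(S)\}$. So I will exhibit $\operatorname{r}(S) \in S \setminus T$ to get $S \not\subset T$, and $F-\operatorname{r}(S) \in T \setminus S$ to get $T \not\subset S$.

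First, $\operatorname{r}(S) \in S$ by definition of the ratio. To see $\operatorname{r}(S) \notin T$, it suffices to observe that $\operatorname{r}(S) \ne F - \operatorname{r}(S)$, which holds because the edge condition gives $\operatorname{r}(S) < F/2$. Thus $\operatorname{r}(S)$ witnesses $S \not\subset T$. This step is straightforward and uses no irreducibility.

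For the other direction, $F - \operatorname{r}(S) \in T$ by construction, so I only need $F - \operatorname{r}(S) \notin S$. The key observation here is the general fact that in any numerical semigroup, if $a \in S$ with $0 < a < F(S)$, then $F(S) - a \notin S$: otherwise $F(S) = a + (F(S)-a)$ would belong to $S$, contradicting $F(S) = \operatorname{F}(S) \notin S$. Applying this with $a = \operatorname{r}(S)$ (valid since $0 < \operatorname{r}(S) < F/2 < F$) gives $F - \operatorname{r}(S) \notin S$, hence $T \not\subset S$.

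There is no serious obstacle; the only subtlety is noting that the non-inclusion $F - \operatorname{r}(S) \notin S$ does not require $S$ to be irreducible, only that $F$ be its Frobenius number, which is part of the definition of $\mathscr I(m,F)$. Putting the two bullets together completes the argument.
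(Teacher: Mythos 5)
Your proof is correct and follows essentially the same route as the paper: unwind the definition of $\prec$ to see that the ratio of the smaller-ordered semigroup lies in one set but not the other, and that $F$ minus that ratio lies in the other but not the first. You supply the two small justifications the paper leaves implicit (that $\operatorname{r}(S)\neq F-\operatorname{r}(S)$ because $\operatorname{r}(S)<F/2$, and that $F-\operatorname{r}(S)\notin S$ because otherwise $F\in S$), which is fine.
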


\begin{proof}
Since $T \prec S,$ by definition, $S = \big(T\backslash \{\operatorname{r}(T)\}\big)\cup  \big\{F-r(T)\big\}$, then $\operatorname{r}(T) \in T \backslash S$ and $F-\operatorname{r}(T) \in S \backslash T$.
\end{proof}

This last corollary says that  no edge of $G(\mathscr I(m,F))$ is an edge of the tree introduced in \cite{tree}.

%%%%%%%%%%%%%%%%%%%%%%%%%%%%%%%%%%%%%%%%%%%%%%%%%%%%%%%%%%%
\subsection*{GAP Computations}

The following GAP code is an implementation of Algorithm \ref{24}. This implementation requires the GAP package \texttt{NumericalSgps} \cite{numericalsgps}. 

{\small
\begin{verbatim}
 sons:=function(s,F)
  local small, m, r, msg, candidates;
  small:=SmallElementsOfNumericalSemigroup(s);
  m:=small[2]; 
  r:=First(small,i->RemInt(i,m) <> 0);
  msg:=function(x)
   return Filtered(small, y-> (y<x) and (x-y) in small)=[0];
  end;
  candidates:=Filtered(small, x-> (x>F/2) and (x<F) 
      and not(2*x-F in small) and not(3*x=2*F) and not(4*x=3*F)
      and (F-x>m) and (F-x<r) and (msg(x)));
  return List(candidates, x->
      NumericalSemigroupBySmallElements(Set(Concatenation(
      Difference(small,[x]),[F-x]))));
 end;

 IrreducibleNumericalSemigroupsWithMultiplicityAndFrobeniusNumber 
 := function(m,F)
  local p,r,msgCmf, Cmf, A, Irrmf, s, lsons;
  p := RemInt(F,2); 
  r := RemInt((F+2-p)/2,m);if r = 0 then r:=m; fi;
  msgCmf := (F+2-p)/2 + Difference([0 .. (m-1)], [m-r, r-(2-p)]);
  Cmf := NumericalSemigroupByGenerators(Concatenation([m,F+m],
      msgCmf));
  A:=[Cmf];
  Irrmf:=A;
  while A<>[] do
   s:=A[1];
   A:=A{[2..Length(A)]};
   lsons:=sons(s,F);
   Append(Irrmf,lsons);
   Append(A,lsons);
  od;
  return Set(Irrmf);
 end;
\end{verbatim}
}
Observe that, since we do not need to compute the whole set of irreducible numerical semigroups with Frobenius $F$ numbers and then restrict our search to those of multiplicity $m$, our algorithm supposes a real improvement in computation time means.
For instance, we can compute $\mathscr I(20,70)$ in $0.149$ seconds, whereas the computation of set of irreducible numerical semigroups with Frobenius number equal to $70$ spent $1.175$ seconds. Both computations were performed running GAP 4.8.8 in a Intel(R) Core(TM) i5-2450M CPU \@ 2.50GHz, by the latest version on the package \texttt{NumericalSgps}.

%
%To justify that our algorithm records good computation times, we have applied it to compute the whole set of irreducible numerical semigroups with Frobenius number $F$. To this end, we have used the following code:
%{\small
%\begin{verbatim}
% IrreducibleNumericalSemigroupsWithFrobeniusNumberALT := function(F)
%  local m,L;
%  L:=[];
%  for m in [2 .. Int((F+2)/2)] do
%   if RemInt(F,m) <> 0 then 
%    Append(L,
%    IrreducibleNumericalSemigroupsWithMultiplicityAndFrobeniusNumber
%    (m,F));
%   fi;
%  od;
%  return L;
% end;
%\end{verbatim}
%}
%
%Following timing table have been obtained running GAP 4.8.6 (\cite{gap}) in a Intel(R) Core(TM) i5-2450M CPU \@ 2.50GHz. We have written (A) for our implementation and (B) for the current GAP function for the computation of the set of irreducible numerical semigroups with Frobenius number $F$
%\begin{center}
% \texttt{IrreducibleNumericalSemigroupsWithFrobeniusNumber} 
%\end{center}
%which is included the package \texttt{NumericalSgps}. The computation time below is expressed in seconds.
%
%\begin{center}
%\begin{tabular}{||c|c|c|c|c|c|c|c|c|c|c|c|c|c|c|c|c|c|c|c|}
%\hline 
%F & 35 & 40 & 45 & 50 & 55 & 60 & 65 & 70 \\
%\hline\hline 
%(A)  & 0.020 & 0.028 & 0.048 & 0.092 & 0.344 & 0.264 & 1.404 & 1.984 \\
%\hline 
%(B)  & 0.028 & 0.036 & 0.180 & 0.444 & 3.572 & 2.040 & 73.688 & 119.568\\
%\hline
%\end{tabular}
%\end{center}
%
%Notice that our algorithm is faster than the known-one which follows the results in \cite{forum}. In addition, we have the option to set a particular multiplicity without computing the whole set of irreducible numerical semigroups with fixed Frobenius number.

%\begin{example}\label{25}
%The elements in the set $\mathscr I(4,11)$ are  $\langle 4,5 \rangle \prec \langle 4,6,9 \rangle$.
%\end{example}

%%%%%%%%%%%%%%%%%%%%%%%%%%%%%%%%%%%%%%%%%%%%%%%%%%%%%%%%%%%
\subsection*{On the Kunz coordinates of $\mathscr I(m,F)$}

The last part of this section is devoted to study of the structure of $\mathscr I(m,F)$ as solution set of certain integer programs. In the following, $S$ will denote a numerical semigroup of multiplicity $m$ and Frobenius number $F$.

\begin{definition}
The \textbf{Ap\'ery set} of $S$ with respect to $m$ is the set $$\mathrm{Ap}(S,m) := \{ s \in S ~|~ s-m \not\in S\}.$$
\end{definition} 

It is known that $\{\mathrm{Ap}(S,m)\backslash\{0\}\big\} \cup \{m\}$ is a (non-necessarily minimal) system of generators of $S$ (see \cite[Lema 1.6]{libro}). Moreover one has that $\mathrm{Ap}(S,m) = \{0 = w(0), w(1), \ldots, w(m-1)\}$, where $w(i), i \in \{0, \ldots, m-1\}$, is the least element in $S$ whose remainder under division by $m$ is $i$, that is, $w(i) = q_i\, m + i$ for some $q_i \in \mathbb{N}\backslash \{0\},\ i = 0, \ldots, m-1$. Therefore, $(q_1, \ldots, q_{m-1}) \in \left(\mathbb{N}\backslash \{0\}\right)^{m-1}$ characterizes $\mathrm{Ap}(S,m)$ and vice versa. The vector $(q_1, \ldots, q_{m-1})$ is called the \textbf{Kunz coordinate vector} of $S$.

An easy computation shows that 
$$w(i) + w(j) = (q_i+q_j)m + i+j \geq \left\{
\begin{array}{lcl}
q_{i+j} m + i+j &\text{if}& i+j \leq m-1\\
q_{i+j-m} m + i+j-m &\text{if}& i+j > m
\end{array}
\right.$$
for every $i,j \in \{0, \ldots, m-1\}$.
Thus, we can see $(q_1, \ldots, q_{m-1}) \in \left(\mathbb{N}\backslash \{0\}\right)^{m-1}$ as a solution of the following system of inequalities:
$$  
\begin{array}{ccl} 
x_i \geq 1 & & 1 \leq i \leq m-1,\\
x_i + x_j - x_{i+j-m \delta} \geq -\delta & & 1 \leq i \leq j \leq m-1,
\end{array}
$$
where $\delta = \lfloor \frac{i+j}m \rfloor$ being $\lfloor - \rfloor$ the floor operator. 

Moreover, since $F = \max\{w(i)-m \mid i = 1, \ldots, m -1\}$ (see \cite[Proposition 2.12]{libro}), a set of extra constrains must be considered. 

Thus, putting all this together we obtain the convex lattice polytope defined by:
\begin{equation}\label{F}
\begin{array}{ccl}
x_i = \frac{F-i}m+1,& & \text{if}\ i \equiv F\ \mod m\\
1 \leq x_i < \frac{F-i}m+1,& & i = 1, \ldots, m-1,\ i \not\equiv F\ \mod m,\\
x_i + x_j - x_{i+j-m \delta} \geq -\delta & & 1 \leq i \leq j \leq m-1,\ i+j \neq m\\
 x_i \in \mathbb{N},& & i = 1, \ldots, m-1,
\end{array}
\end{equation}
where $\delta = \lfloor \frac{i+j}m \rfloor$.

The following result is a direct consequence of Proposition \ref{1}.

\begin{corollary}
If $F \geq m-1 \geq 1$ and $m \nmid F$, then there exists, at least, one integer point satisfying \eqref{F}.
\end{corollary}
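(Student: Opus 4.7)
The plan is to exhibit a Kunz coordinate vector satisfying \eqref{F} by invoking Proposition \ref{1}. Under the hypotheses $F\geq m-1 \geq 1$ and $m\nmid F$, Proposition \ref{1} guarantees $\mathscr{L}(m,F) \neq \varnothing$; pick any $S\in \mathscr{L}(m,F)$ (for concreteness one can take $S = \langle m\rangle \cup \{F+1,\rightarrow\}$). Let $\mathrm{Ap}(S,m) = \{0=w(0), w(1), \ldots, w(m-1)\}$ with $w(i) = q_i\,m + i$, and let $(q_1, \ldots, q_{m-1})$ be its Kunz coordinate vector. The claim is that this vector satisfies all constraints of \eqref{F}.

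The verification splits into three routine checks. First, for each $i$ with $1\leq i\leq m-1$, we have $i\notin S$ since $m=\operatorname{m}(S)$, so $w(i)\geq m+i$, giving $q_i \geq 1$. Second, since $F+m > F$ we have $F+m\in S$, and since $w(i)-m\notin S$ we have $w(i)-m\leq F$; hence $w(i)\leq F+m$, i.e.\ $q_i\leq (F-i)/m + 1$. When $i\equiv F \mod m$, the integer $F$ itself has residue $i$ and is not in $S$, so no element of $S$ with residue $i$ lies below $F+m$; hence $w(i) = F+m$, yielding the equality $q_i = (F-i)/m + 1$. When $i\not\equiv F \mod m$, the quantity $(F-i)/m$ is not an integer, so the integer $q_i$ must satisfy the strict inequality $q_i < (F-i)/m + 1$.

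Third, for the semigroup inequalities, fix $1\leq i\leq j\leq m-1$ with $i+j\neq m$, and write $\delta = \lfloor(i+j)/m\rfloor\in\{0,1\}$ and $k = i+j-m\delta\in\{1,\ldots,m-1\}$. Then
\[
w(i)+w(j) = (q_i+q_j+\delta)\,m + k \in S,
\]
so this element, having residue $k$ modulo $m$ and lying in $S$, must be at least $w(k) = q_k\, m + k$. Comparing coefficients of $m$ gives $q_k \leq q_i + q_j + \delta$, i.e.\ $q_i + q_j - q_k \geq -\delta$, which is exactly the required inequality.

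No obstacle is expected; the statement is essentially a translation of the existence result in Proposition \ref{1} into the polytope language of \eqref{F}. The only care point is checking that the distinction between the equality case ($i \equiv F \bmod m$) and the strict-inequality case ($i \not\equiv F \bmod m$) corresponds exactly to whether $(F-i)/m$ is an integer, which makes the integrality of $q_i$ force the strict inequality in the second case.
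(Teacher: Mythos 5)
Your proof is correct and follows the same route as the paper, which simply notes that the corollary is a direct consequence of Proposition \ref{1}: take any $S\in\mathscr{L}(m,F)$ (e.g.\ $\langle m\rangle\cup\{F+1,\rightarrow\}$) and check that its Kunz coordinate vector satisfies \eqref{F}, the verification being exactly the derivation of \eqref{F} given in the text. You merely spell out the three checks (positivity, the bound $w(i)\le F+m$ with equality precisely when $i\equiv F \bmod m$, and the superadditivity inequalities) that the paper leaves implicit.
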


The following results will useful in the sequel.

\begin{proposition}\label{37}
Let $m$ and $F$ be positive integers. With the same notation as in Section \ref{S5}. If $T$ and $S \in \mathscr I(m,F)$ and $T \prec S$, then 
$$\mathrm{Ap}(S,m) = \Big(\mathrm{Ap}(T,m)\backslash \{\operatorname{r}(T),m+F-\operatorname{r}(T)\}\Big) \bigcup \{F-\operatorname{r}(T), m+\operatorname{r}(T)\}.$$
\end{proposition}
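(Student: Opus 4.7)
The plan is to set $r := \operatorname{r}(T)$, let $a \in \{1,\ldots,m-1\}$ be the residue of $r$ modulo $m$, and track how the Apéry set changes when we remove $r$ and insert $F - r$. Because $F - r$ lies in residue class $m - a$, and these are the only two elements distinguishing $T$ from $S$, the Apéry element $w_S(i)$ coincides with $w_T(i)$ for every $i \notin \{a, m-a\}$; hence the whole argument concentrates on classes $a$ and $m - a$. As a warm-up one checks that $r \in \mathrm{Ap}(T,m)$, and in fact $r = w_T(a)$: since $r$ is by definition the smallest element of $T \setminus \langle m \rangle$ and $r > m$, the element $r - m$ lies outside $\langle m \rangle$ and is strictly smaller than $r$, so it cannot belong to $T$.

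The core step is to prove $w_T(m - a) = m + F - r$. First I would rule out every smaller positive candidate: no element of the form $F - r - km$ with $k \geq 0$ can belong to $T$, because together with $r + km \in T$ it would force $F \in T$, a contradiction. Second, I would show that $m + F - r$ itself lies in $T$ by invoking irreducibility. By the warm-up, $r - m \notin T$; the hypothesis $r < F/2$ gives $r - m \neq F/2$; so Lemma \ref{20} (applied in the symmetric case when $F$ is odd and in the pseudo-symmetric case when $F$ is even) yields $F - (r-m) = m + F - r \in T$. Since the positive members of class $m-a$ below $m + F - r$ are precisely the excluded $F - r - km$, this confirms $w_T(m - a) = m + F - r$.

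Finally I would compute the two affected Apéry entries of $S$. For class $a$: removing $r$ from $T$ leaves $r + m \in T$ as the smallest element of $S$ in this class, since two distinct elements of $T$ lying in the same residue class modulo $m$ differ by at least $m$; hence $w_S(a) = m + r$. For class $m - a$: the newly inserted $F - r$ is smaller than $m + F - r = w_T(m-a)$ and larger than zero, so $w_S(m - a) = F - r$. Combining with the untouched classes delivers the claimed identity. The main obstacle, as I see it, is the application of Lemma \ref{20}, which is the one place where irreducibility of $T$ (and the uniform handling of the symmetric and pseudo-symmetric cases via the excluded value $F/2$) actually does the work.
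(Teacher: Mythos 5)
Your strategy is essentially the paper's own: isolate the two residue classes affected by exchanging $r:=\operatorname{r}(T)$ for $F-r$, use Lemma \ref{20} (i.e.\ the symmetry or pseudo-symmetry of the irreducible semigroup $T$) applied to $r-m\notin T$ to get $m+F-r\in T$, and read off the four Ap\'ery elements $r,\,m+F-r$ (leaving) and $m+r,\,F-r$ (entering). All of your computations with the actual integers $r$, $r+m$, $F-r-km$, $m+F-r$ are correct, and your treatment of the unaffected classes and of $w_S(a)=m+r$ is more explicit than the paper's. The one concrete error is the residue-class bookkeeping: you assert that $F-r$ lies in the class $m-a$, where $a$ is the residue of $r$. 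That would force $F\equiv 0\pmod m$, which is excluded by the standing hypothesis $m\nmid F$; the class of $F-r$ is $(F-a)\bmod m$, and this is never $m-a$ in our setting. Read literally, several of your statements are therefore false — for instance $w_T(m-a)=m+F-r$ cannot hold because $m+F-r\not\equiv -a \pmod m$, and your claim that $w_S(i)=w_T(i)$ for all $i\notin\{a,m-a\}$ would wrongly include the class that actually does change. The repair is purely notational: replace ``$m-a$'' by ``the residue of $F-r$'' throughout, since every subsequent step argues with the numbers themselves rather than with the label.

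After relabelling you should also record that the two affected classes are distinct, i.e.\ $2r\not\equiv F\pmod m$: otherwise $F-2r$ would be a positive multiple of $m$ (it is positive because $r<F/2$), whence $F=2r+(F-2r)\in T$, a contradiction. Note that with your label $m-a$ the distinctness question becomes $2a\not\equiv 0\pmod m$, which is the wrong condition and can genuinely fail for even $m$ — another symptom of the mislabelling. With these two adjustments your argument is complete and matches the published proof, which is in fact terser: it only verifies membership of the four elements in the respective Ap\'ery sets and leaves the invariance of the remaining classes implicit.
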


\begin{proof}
By definition, $r(T) < F/2$ and $S = (T\backslash\{\operatorname{r}(T)\}) \cup \{F-\operatorname{r}(T)\}$. Since $\operatorname{r}(T)$ is a minimal generator, it belongs to $\mathrm{Ap}(T,m)$, and $\operatorname{r}(T) \not\in S$.
But $m + \operatorname{r}(T) \in S$ and $m + \operatorname{r}(T) \equiv \operatorname{r}(T)\ \mod m$, and thus $m + \operatorname{r}(T) \in \mathrm{Ap}(S,m)$. On the other hand, $\operatorname{r}(T) - m \not\in T$, then, by Lemma \ref{20}, $F-\operatorname{r}(T) + m \in T$ and $F-\operatorname{r}(T)+m \in \mathrm{Ap}(T,m)$. Now, since $\operatorname{r}(T) \not\in S$, then $F - \operatorname{r}(T) \in S$ and, by Lemma \ref{20} again, $F - \operatorname{r}(T) \equiv F-\operatorname{r}(T)+m\ \mod m$, we conclude that $F - \operatorname{r}(T) \in \mathrm{Ap}(S,m)$.
\end{proof}

 Now, we exhibit the equations of the convex sets in $\mathbb{R}^{m-1}$ whose integral points are the Kunz coordinates of the irreducible numerical semigroups.

The next result is well known and is easy to prove.

\begin{lemma}\label{38}  
Let $S$ be a numerical semigroup, $m\in S\backslash\{0\}$ and  $\mathrm{Ap}(S,m) = \{0, q_1m+1, \ldots, q_{m-1}m+m-1\}$,  then 
$\operatorname{g}(S)=q_1+\cdots + q_{m-1}$.
\end{lemma}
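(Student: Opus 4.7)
The plan is to count the gaps of $S$ by sorting them according to their residue modulo $m$ and then using the defining property of the Apéry set to count how many gaps lie in each residue class.

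First, I would partition $\mathbb{N}\setminus S$ as $\bigsqcup_{i=0}^{m-1} R_i$, where $R_i = \{x \in \mathbb{N}\setminus S \mid x \equiv i \pmod m\}$. Since $m \in S$ and $S$ is closed under addition, every non-negative multiple of $m$ lies in $S$, so $R_0 = \varnothing$. For each $i \in \{1, \ldots, m-1\}$, by definition $w(i) = q_i m + i$ is the least element of $S$ congruent to $i$ modulo $m$. The elements of $\mathbb{N}$ congruent to $i$ modulo $m$ that are strictly smaller than $w(i)$ are precisely $i, m+i, 2m+i, \ldots, (q_i-1)m + i$, giving exactly $q_i$ elements; none of these belong to $S$.

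The key remaining observation is that every element of $\mathbb{N}$ congruent to $i$ modulo $m$ and greater than or equal to $w(i)$ does belong to $S$: indeed, any such element has the form $w(i) + km$ with $k \in \mathbb{N}$, and this lies in $S$ because $w(i) \in S$ and $m \in S$. Consequently $|R_i| = q_i$ for every $i \in \{1, \ldots, m-1\}$.

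Adding over the residue classes gives
\[
\operatorname{g}(S) = |\mathbb{N}\setminus S| = \sum_{i=0}^{m-1} |R_i| = \sum_{i=1}^{m-1} q_i = q_1 + \cdots + q_{m-1},
\]
as required. No step is genuinely hard; the only thing to be careful about is making the two inclusions (gaps below $w(i)$, non-gaps from $w(i)$ onward in each class) explicit so that the count in each residue class is unambiguous.
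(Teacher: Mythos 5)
Your proof is correct; it is the standard counting argument, partitioning the gaps by residue class modulo $m$ and observing that the class of $i$ contributes exactly the $q_i$ gaps $i, m+i, \ldots, (q_i-1)m+i$ below $w(i)$, while everything at or above $w(i)$ in that class lies in $S$. The paper itself omits the proof (it declares the lemma ``well known and easy to prove''), and your argument is precisely the classical one found in the literature it cites, so there is nothing to reconcile.
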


From Lemmas \ref{15} and \ref{38}, we obtain the following result.

\begin{proposition}\label{39}
If $F \geq 3, m \leq \frac{F+2}2$ and $m \nmid F$, then the sum of the Kunz coordinates vector of $S \in \mathscr I(m,F)$   is equal to  $\lceil \frac{F+1}{2} \rceil$.
\end{proposition}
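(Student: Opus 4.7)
The plan is to combine Lemma \ref{38} and Lemma \ref{15} directly; no further computation is required. Since the Proposition is phrased as an ``if \ldots then'' about elements of $\mathscr I(m,F)$, the existence hypotheses $F\ge 3$, $m\le \frac{F+2}{2}$, $m\nmid F$ only serve to guarantee (via Proposition \ref{19}) that $\mathscr I(m,F)\neq \varnothing$; the identity itself is purely a consequence of $S$ being irreducible with Frobenius number $F$.

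First I would fix $S\in \mathscr I(m,F)$ and write its Apéry set as $\mathrm{Ap}(S,m) = \{0, q_1 m + 1, \ldots, q_{m-1} m + (m-1)\}$, so that $(q_1,\ldots,q_{m-1})$ is by definition the Kunz coordinate vector of $S$. Applying Lemma \ref{38} immediately gives
\[
\operatorname{g}(S) \;=\; q_1 + q_2 + \cdots + q_{m-1}.
\]
Next, since $S$ is irreducible with $\operatorname{F}(S)=F$, Lemma \ref{15} yields
\[
\operatorname{g}(S) \;=\; \Big\lceil \tfrac{F+1}{2} \Big\rceil.
\]
Chaining these two equalities produces
\[
q_1 + q_2 + \cdots + q_{m-1} \;=\; \Big\lceil \tfrac{F+1}{2} \Big\rceil,
\]
which is exactly the claim.

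There is essentially no obstacle here — the statement is a one-line corollary of the two lemmas already established. The only thing worth verifying is that the enumeration convention for the Apéry set in Lemma \ref{38} agrees with the one used to define the Kunz coordinate vector earlier in the section (both fix the residue $i$ of $w(i)$ modulo $m$ and write $w(i)=q_i m + i$), so that the same $q_i$'s appear on both sides. Once this bookkeeping is noted, the proof is complete.
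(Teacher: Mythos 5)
Your proof is correct and is exactly the argument the paper intends: the paper states Proposition \ref{39} as an immediate consequence of Lemmas \ref{15} and \ref{38}, which is precisely the chaining of $\operatorname{g}(S)=q_1+\cdots+q_{m-1}$ with $\operatorname{g}(S)=\lceil\frac{F+1}{2}\rceil$ that you give. Your remark that the hypotheses only ensure $\mathscr I(m,F)\neq\varnothing$ and your check of the Ap\'ery-set indexing convention are both accurate.
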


Therefore, the Kunz coordinate vector of the $S \in \mathscr I(m,F)$ lies in the hyperplane $x_1 +\cdots + x_{m-1} =  \lceil \frac{F+1}{2} \rceil$, for some positive integer  $\lceil \frac{F+1}{2} \rceil$ that only depends on $m$ and $F$. 

The next result is a consequence of \cite[Propositions 4.10 and 4.15]{libro}. 
\begin{proposition}\label{40}
Let $q$ and $r$ be the quotient and the remainder of the division of $\lceil \frac{F+1}{2} \rceil$  by $m$  and let 
$\left(q_1,\ldots, q_{m-1}\right)$ be the Kunz coordinates vector of  $S \in \mathscr I(m,F)$. Then 
$q_i+q_j + \delta_1 = 2q +1+ \delta_2$ for every $1 \leq i \leq j \leq m-1$ such that $i+j \equiv F\ \mod m$.
\end{proposition}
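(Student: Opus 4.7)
The plan is to translate the symmetric structure of the Apéry set encoded in Lemma \ref{20} into a relation on the Kunz coordinates. The first step is to establish the Apéry-set identity
\[
w(i) + w(j) = F + m
\]
for every $i, j \in \{1, \ldots, m-1\}$ with $i + j \equiv F \pmod m$. To prove this, I would set $y := F + m - w(i)$ and verify that $y = w(j)$. Since $S$ is a semigroup, $w(i) \in S$, and $F \notin S$, the element $F - w(i)$ cannot belong to $S$; in other words, $y - m \notin S$. On the other hand, $w(i) - m \notin S$ by the definition of $\mathrm{Ap}(S,m)$, and, provided $w(i) - m \neq F/2$, Lemma \ref{20} gives $y = F - (w(i) - m) \in S$. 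Thus $y \in \mathrm{Ap}(S,m)$, and since $y \equiv F - i \equiv j \pmod m$, we obtain $y = w(j)$.

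The second step is to rewrite this identity in terms of Kunz coordinates. Set $k := F \bmod m \in \{1, \ldots, m-1\}$ and $t := (F-k)/m$, so that $F + m = (t+1)m + k$. Writing $w(i) = q_i m + i$, $w(j) = q_j m + j$, and $i + j = \delta_1 m + k$ with $\delta_1 = \lfloor (i+j)/m \rfloor \in \{0, 1\}$, the Apéry identity becomes
\[
(q_i + q_j + \delta_1)\,m + k = (t+1)\,m + k,
\]
whence $q_i + q_j + \delta_1 = t + 1$.

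The third step is to relate $t + 1$ to $2q + 1$ using the hypothesis $\lceil (F+1)/2 \rceil = qm + r$. A direct computation shows that $F = 2qm + 2r - 1$ if $F$ is odd and $F = 2qm + 2r - 2$ if $F$ is even; combining with $F = tm + k$, one sees that $\delta_2 := t - 2q$ is an integer in $\{-1, 0, 1\}$ determined solely by $m$, $r$, and the parity of $F$. Substituting yields $q_i + q_j + \delta_1 = 2q + 1 + \delta_2$, as required.

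I expect the main obstacle to be the pseudo-symmetric borderline case $w(i) = F/2 + m$, which forces $i \equiv F/2 \pmod m$ and hence $j = i$. In this situation $y = F/2 \notin S$, so the step $y = w(j)$ breaks down and $w(i) + w(j) = 2w(i) = F + 2m \neq F + m$. To cover this remaining case one has to either invoke \cite[Propositions 4.10 and 4.15]{libro}, which furnish the explicit Apéry-set descriptions of symmetric and pseudo-symmetric semigroups together with the correct boundary term, or compute $q_i$ directly from $w(i) = F/2 + m$ and verify the claimed identity after recalibrating $\delta_2$ accordingly.
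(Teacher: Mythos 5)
Your Steps 1 and 2 are correct and, unlike the paper (whose entire proof is the citation of \cite[Propositions 4.10 and 4.15]{libro}), actually establish the pairing $w(i)+w(j)=F+m$, hence $q_i+q_j+\delta_1=\lfloor F/m\rfloor+1$, for every pair with $i+j\equiv F \pmod m$ \emph{except} the one you flag at the end. The problem is that this flagged case is not a loose end that can be closed by ``recalibrating $\delta_2$'': it is a genuine counterexample to the proposition as written. When $F$ is even there is always exactly one residue $i_0\equiv F/2\pmod m$ (note $m\nmid F/2$, else $F\in S$), the pair $(i_0,i_0)$ always satisfies $2i_0\equiv F\pmod m$, and $w(i_0)=F/2+m$ forces $2q_{i_0}+\delta_1=\lfloor F/m\rfloor+2$, which is one more than the value for every other admissible pair. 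Since a single $\delta_2$ must serve all pairs, no recalibration can work when both kinds of pairs occur. Concretely, $S=\langle 4,5,7\rangle\in\mathscr I(4,6)$ has Kunz vector $(1,2,1)$, $q=1$, $r=0$, $\delta_2=-1$, so the right-hand side is $2$; the pair $(1,1)$ gives $1+1+0=2$, but the exceptional pair $(3,3)$ gives $1+1+1=3$. Likewise $S=\langle 5,8,11,12\rangle\in\mathscr I(5,14)$ has Kunz vector $(2,2,1,3)$ and right-hand side $4$, while the pairs $(1,3)$ and $(2,2)$ give $3$ and $4$ respectively.

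There is also a smaller slip in your Step 3: you \emph{define} $\delta_2:=t-2q$ (with $t=\lfloor F/m\rfloor$), but the statement fixes $\delta_2=\lfloor (2r-1)/m\rfloor$. These agree for odd $F$ (write $F=2qm+(2r-1)$), but for even $F$ one has $F=2qm+(2r-2)$ and $\lfloor(2r-2)/m\rfloor\neq\lfloor(2r-1)/m\rfloor$ precisely when $2r-1=m$; this is what happens in the $\mathscr I(5,14)$ example above, where your $t-2q=0$ but the paper's $\delta_2=1$. The honest conclusion, consistent with \cite[Proposition 4.15]{libro} which sets the Ap\'ery element $F/2+m$ apart from the symmetric pairing, is that for even $F$ the pair $(i_0,i_0)$ must be excluded from the relation (or assigned the separate relation $2q_{i_0}+\delta_1=\lfloor F/m\rfloor+2$); for all remaining pairs your argument, together with the verification $\lfloor F/m\rfloor-2q=\lfloor(2r-1)/m\rfloor$ in the unexceptional cases, is complete and is more informative than the paper's bare citation.
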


In conclusion, the Kunz coordinates of $S \in \mathscr I(m,F)$ must verify the following:
\begin{equation}\label{IP}
\begin{array}{ccl}
x_1 + \ldots + x_{m-1} = \lceil \frac{F+1}2\rceil\\
x_i = \frac{F-i}m+1,& & \text{if}\ i \equiv F\ \mod m \\
1 \leq x_i < \frac{F-i}m+1,& & i = 1, \ldots, m-1,\ i \not\equiv F\ \mod m,\\
x_i + x_j - x_{i+j-m \delta_1} \geq -\delta_1 & & 1 \leq i \leq j \leq m-1,\ i+j \neq m\\
x_i + x_j + \delta_1 = 2q + 1+ \delta_2 & & 1 \leq i \leq j \leq m-1,\ i+j \equiv F\ \mod m \\
 x_i \in \mathbb{N},& & i = 1, \ldots, m-1,\\
\end{array}
\end{equation}
where $\delta_1 = \lfloor \frac{i+j}m \rfloor$ and $\delta_2 = \lfloor \frac{2r-1}m \rfloor$.

Note that  as a consequence of  \cite[Propositions 4.10 and 4.15]{libro} we have that the integer solutions of the previous system are the Kunz coordinates of an irreducible numerical semigroup with Frobenius number $F$ and mul-
tiplicity $m$. Therefore, our Algorithm \ref{24} can be seen as a solver of \eqref{IP}.

%%%%%%%%%%%%%%%%%%%%%%%%%%%%%%%%%%%%%%%%%%%%%%%%%%%%%%%%%%%
%%%%%%%%%%%%%%%%%%%%%%%%%%%%%%%%%%%%%%%%%%%%%%%%%%%%%%%%%%%
\section{An algorithm for the computation of the classes}\label{S4}

In this section we address the problem of finding an algorithm to compute the set $[S]$, for each  $S\in \mathscr I(m,F)$, that is, the second algorithm mentioned in Remark \ref{remAlg}.

We say that a numerical semigroup $S$ is \textbf{homogeneous} if it has no minimal generator in the interval $\Big[\frac{F(S)}{2}, F(S)\Big].$ We denote by $\mathscr H(m,F)$ the set of homogeneous numerical semigroups with multiplicity $m$ and Frobenius number 
$F$.

\begin{proposition}\label{10}
Let $m$ and $F$ be  positive integers such that $m\geq 3$, $F> 2m$,  $m\nmid F$. If $S\in \mathscr L(m,F)$, then $[S]\cap  \mathscr H(m,F)= \{Z([S])\}$ \ Moreover 
\[Z([S])=\left\langle\theta(S)\cup \{m\}\right\rangle\cup \{F+1,\rightarrow\}.\]
\end{proposition}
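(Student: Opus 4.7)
The plan is to introduce $T := \langle \theta(S) \cup \{m\}\rangle \cup \{F+1, \rightarrow\}$ and prove three things: (i) $T \in [S]$, (ii) $T$ is the minimum of $[S]$ (hence $T = Z([S])$), and (iii) $T$ is the only homogeneous member of $[S]$. Once (i)--(iii) are established the statement follows.

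First I would check that $T \in \mathscr L(m,F)$ and that $T$ has the right $\theta$. That $T$ is a submonoid is a small case analysis on whether each summand lies in $\langle\theta(S)\cup\{m\}\rangle$ or in the terminal tail $\{F+1,\rightarrow\}$. Its multiplicity is $m$ because every nonzero element of $\langle \theta(S)\cup\{m\}\rangle$ is $\geq m$ and the tail is above $F>m$. Its Frobenius number is $F$: clearly $F+1\in T$, and if $F$ were in $\langle\theta(S)\cup\{m\}\rangle$ then $F$ would be a nonnegative integer combination of elements of $S$, forcing $F\in S$, a contradiction. For $\theta(T)=\theta(S)$, the inclusion $\theta(S)\subseteq\theta(T)$ is immediate. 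Conversely, any $x\in\theta(T)$ satisfies $m<x<F/2<F+1$, so $x\in\langle\theta(S)\cup\{m\}\rangle\subseteq S$; since $m<x<F/2$, we get $x\in\theta(S)$.

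Next I would show $T=Z([S])$ by proving $T\subseteq S'$ for every $S'\in[S]$. Indeed, any such $S'$ has multiplicity $m$, Frobenius number $F$, and contains $\theta(S')=\theta(S)$, so it contains the semigroup generated by $\theta(S)\cup\{m\}$ together with $\{F+1,\rightarrow\}$, which is precisely $T$.

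The core of the proof is the homogeneity and uniqueness. For homogeneity, I would show $T$ has no minimal generator in $[F/2,F]$. Take $x\in T$ with $F/2\leq x<F$ (recall $F\notin T$); then $x\in\langle\theta(S)\cup\{m\}\rangle$, so $x$ is a nontrivial $\mathbb{N}$-combination of $\theta(S)\cup\{m\}$. If the coefficient of $m$ is positive, then $x-m\in T\setminus\{0\}$ and $x=m+(x-m)$ is reducible; otherwise the combination uses only elements of $\theta(S)$, each $<F/2\leq x$, so at least two summands appear and again $x$ is reducible. Either way $x$ is not a minimal generator. For uniqueness, suppose $T'\in[S]\cap\mathscr H(m,F)$; then $T\subseteq T'$ by minimality, and if $T\neq T'$ pick $x$ minimal in $T'\setminus T$. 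A decomposition $x=a+b$ with $a,b\in T'\setminus\{0\}$ would force, by minimality of $x$, both $a,b\in T$, hence $x=a+b\in T$, contradiction; so $x$ is a minimal generator of $T'$. Homogeneity of $T'$ rules out $x\in[F/2,F]$; the case $x>F$ gives $x\in\{F+1,\rightarrow\}\subseteq T$, and the case $x<F/2$ forces $x>m$ (since $0,m\in T$ and $T'$ has multiplicity $m$), whence $x\in\theta(T')=\theta(S)\subseteq T$. All branches contradict $x\notin T$, so $T'=T$.

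The main obstacle is the uniqueness step: it requires simultaneously exploiting that $Z([S])$ is already explicitly known, that $T'$ is homogeneous (to exclude minimal generators in the middle range), and that $\theta(T')=\theta(S)$ (to handle the lower range); the ``minimal new element is a minimal generator'' trick is what glues these together.
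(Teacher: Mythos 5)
Your proof is correct, and its skeleton is the same as the paper's: identify the explicit set $T=\langle\theta(S)\cup\{m\}\rangle\cup\{F+1,\rightarrow\}$ with $Z([S])$, show it is homogeneous, and prove uniqueness by splitting an arbitrary element of a homogeneous $S'\in[S]$ into the ranges below $F/2$ (handled by $\theta(S')=\theta(S)$), inside $[F/2,F]$ (excluded by homogeneity), and above $F$ (handled by the Frobenius number). The differences are in how two steps are carried out. The paper works abstractly with $Z([S])=\bigcap_{S'\in[S]}S'$ from the start: it proves homogeneity by contradiction, using Lemma \ref{9} to delete a minimal generator in $(\frac{F}{2},F)$ and thereby violate the minimality of $Z([S])$, and only identifies $Z([S])$ with the explicit set at the very end. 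You instead start from the explicit $T$, verify directly that $T\in[S]$ and that every element of $\langle\theta(S)\cup\{m\}\rangle$ in $[\frac{F}{2},F)$ decomposes (so $T$ is homogeneous without invoking Lemma \ref{9}), and then show $T$ is the minimum. Your route is slightly more computational but more self-contained. You also make rigorous a point the paper leaves terse: in the uniqueness step the paper asserts that $\frac{F}{2}<x<F$ and $x\in S'$ imply $x\in Z([S])$ ``because $S'$ has no minimal generators there,'' which really needs the descent you supply via the minimal element of $T'\setminus T$ being a minimal generator; your version also cleanly covers the boundary case $x=\frac{F}{2}$ when $F$ is even, which the paper's three-case split skips over.
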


\begin{proof}
If $Z([S])$ does not belong to $\mathscr H(m,F)$, there exists a minimal generator $x$ of $Z([S])$ such that $\frac{F}{2}<x<F$. By using Lemma \ref{9}, we deduce that $Z([S])\backslash\{x\}\in [S]$ in contradiction with the minimality of $Z([S])$.

Next we see that if $S'\in [S]\cap  \mathscr H(m,F)$ then $S'=Z([S])$. Since $S'\in [S]$ it follows that $Z([S])\subseteq S'$. For the other inclusion, we consider $x\in S'$ and we distinguish three cases. 
\begin{itemize}
\item If $x<\frac{F}{2}$ then we have $x\in Z([S])$, since $S'  \sim \big( Z([S]) \big)$
\item If $\frac{F}{2}<x < F$ then $x\in Z([S])$, because $S'$ has no minimal generators in the interval $\Big[\frac{F}{2}, F\Big].$ 
\item If $x> F$ then $x\in Z([S])$ because $\operatorname{F}(Z([S]))=F$.
\end{itemize}
Finally, observe that $\left\langle\theta(S)\cup \{m\}\right\rangle\cup \{F+1,\rightarrow\}\in
\mathscr H(m,F)$ and thus  $\left\langle\theta(S)\cup \{m\}\right\rangle\cup \{F+1,\rightarrow\}=Z([S])$.
\end{proof}

As a consequence of Theorem \ref{6} and Proposition \ref{10}, we obtain the following result.

\begin{corollary}\label{11}
Let $m$ and $F$ be  positive integers such that $m\geq 3$, $F> 2m$ and $m\nmid F$. If $S\in \mathscr I(m,F)$, then  a numerical $S'$ belongs to $[S]$ if and only if $Z([S]) \subseteq S'\subseteq S$.
\end{corollary}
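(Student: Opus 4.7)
The plan is to combine Theorem~\ref{6} and Proposition~\ref{10} so that $[S]$ becomes the interval $[Z([S]),U([S])]$ inside the lattice of numerical semigroups contained in $S$, and then use the assumption $S \in \mathscr I(m,F)$ to identify $U([S])$ with $S$ itself.

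For the forward implication, suppose $S' \in [S]$. By Theorem~\ref{6}, $S$ is the unique irreducible member of $[S]$ and $S = U([S])$ is the maximum of $[S]$ with respect to inclusion, so $S' \subseteq S$. By Proposition~\ref{10}, $Z([S])$ is the minimum of $[S]$ with respect to inclusion, so $Z([S]) \subseteq S'$. This gives $Z([S]) \subseteq S' \subseteq S$.

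For the converse, assume $Z([S]) \subseteq S' \subseteq S$ with $S'$ a numerical semigroup. I need to check that $S' \in \mathscr L(m,F)$ and that $\theta(S') = \theta(S)$. For the multiplicity: $m \in Z([S]) \subseteq S'$, and $\{1,\ldots,m-1\} \cap S = \varnothing$ forces $\{1,\ldots,m-1\} \cap S' = \varnothing$, so $\operatorname{m}(S')=m$. For the Frobenius number: $F \notin S$ gives $F \notin S'$, and $\{F+1,\rightarrow\} \subseteq Z([S]) \subseteq S'$, so $\operatorname{F}(S')=F$. Finally, for any integer $s$ with $m < s < F/2$, the relations $Z([S]) \sim S$ (both belong to $[S]$) yield $s \in Z([S]) \iff s \in S$; combined with the sandwich $Z([S]) \subseteq S' \subseteq S$, this gives $s \in S' \iff s \in S$, hence $\theta(S') = \theta(S)$ and $S' \in [S]$.

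I do not anticipate a serious obstacle: the whole argument is a bookkeeping exercise once Theorem~\ref{6} and Proposition~\ref{10} are in place. The one spot to be careful about is the identification $U([S]) = S$, which requires the hypothesis $S \in \mathscr I(m,F)$ and the uniqueness of the irreducible element in each class (Theorem~\ref{6}); without this hypothesis the conclusion would only read $Z([S]) \subseteq S' \subseteq U([S])$.
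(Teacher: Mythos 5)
Your argument is correct and is exactly the intended one: the paper states this corollary without proof as "a consequence of Theorem \ref{6} and Proposition \ref{10}", and your write-up simply supplies the bookkeeping (identifying $U([S])=S$ via the uniqueness of the irreducible element, and checking multiplicity, Frobenius number and $\theta$ for the sandwiched $S'$). No gaps.
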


For a given numerical semigroup  $S\in \mathscr I(m,F)$, we write \[D(S) = S\backslash Z([S]).\] Given two subsets $A$ and $B$ of $\mathbb{N}$, we write $A+B$ for the set $\{a + b \mid a \in A,\ b \in B\}$.

\begin{lemma}\label{12}
Let $m$ and $F$ be  positive integers such that $m\geq 3$, $F> 2m$,  $m\nmid F$. If $S\in \mathscr I(m,F)$ and $B\subseteq D(S)$, then $$\overline{S}=Z([S])\cup  \Big(\big(B+Z([S])\big)\cap D(S)\Big)\in [S].$$ Moreover, all the elements in $[S]$ are in that form. 
\end{lemma}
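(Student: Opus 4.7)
The plan is to apply Corollary \ref{11}, which reduces the membership $\overline{S} \in [S]$ to checking that $\overline{S}$ is a numerical semigroup sandwiched between $Z([S])$ and $S$. The sandwich is transparent from the definitions: $Z([S]) \subseteq \overline{S}$ by construction, and $\overline{S} \subseteq Z([S]) \cup D(S) = S$. So the core work is to verify that $\overline{S}$ is closed under addition; finiteness of $\mathbb{N}\setminus \overline{S}$ is automatic from $Z([S]) \subseteq \overline{S}$.

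The decisive observation I would establish first is that every element of $D(S)$ lies strictly above $F/2$. Using the explicit description $Z([S]) = \langle \theta(S)\cup\{m\}\rangle \cup \{F+1,\rightarrow\}$ from Proposition \ref{10}, the elements of $S$ not exceeding $F/2$ are precisely $\{0,m\}\cup\theta(S)$, all of which already sit inside $Z([S])$; moreover $F/2 \notin S$, for otherwise $F=F/2+F/2 \in S$, a contradiction. Hence $D(S) \subseteq \bigl(F/2,F\bigr)$. With this bound in hand, the closure check reduces to a short case analysis. If $x,y$ both lie in $(B+Z([S]))\cap D(S)$, then $x+y>F$, so $x+y\in Z([S]) \subseteq \overline{S}$. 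If $x \in Z([S])$ and $y = b+z$ with $b\in B$, $z\in Z([S])$, then $x+y = b+(x+z)$ with $x+z\in Z([S])$, so $x+y \in B+Z([S])$; and since $x+y\in S = Z([S]) \sqcup D(S)$, it lies either in $Z([S])$ or in $(B+Z([S]))\cap D(S)$, both of which are contained in $\overline{S}$. The case where both $x,y$ are in $Z([S])$ is trivial.

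For the converse, I would take an arbitrary $S' \in [S]$ and set $B := S' \setminus Z([S])$. Corollary \ref{11} gives $S' \subseteq S$, whence $B \subseteq D(S)$. The identity $S' = Z([S]) \cup \bigl((B + Z([S])) \cap D(S)\bigr)$ is then straightforward: the inclusion $\supseteq$ follows from $Z([S]) \subseteq S'$ combined with $B + Z([S]) \subseteq S' + S' \subseteq S'$ by closure of the numerical semigroup $S'$; the inclusion $\subseteq$ is obtained by partitioning each $x \in S'$ according to whether it belongs to $Z([S])$ or to $B = S' \setminus Z([S])$, writing $x = x + 0$ in the latter case so that $x \in (B + Z([S])) \cap D(S)$.

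The principal obstacle is the closure case in which both summands come from $D(S)$; everything rides on the bound $D(S) \subseteq (F/2,F)$, and in particular on the subtle point that $\theta(S)$ together with $0$ and $m$ exhausts $S \cap [0,F/2]$. Once that geometric fact is in place, both halves of the lemma collapse into routine verifications.
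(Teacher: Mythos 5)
Your proposal is correct and follows essentially the same route as the paper: reduce membership in $[S]$ to the sandwich $Z([S])\subseteq \overline{S}\subseteq S$ via Corollary \ref{11}, use the fact that every element of $D(S)$ exceeds $F/2$ so that sums of two such elements land in $Z([S])$, and obtain the converse by taking $B=S'\setminus Z([S])$ and invoking closure of $S'$. You merely spell out the mixed case ($x\in Z([S])$, $y\in (B+Z([S]))\cap D(S)$) and the justification of $D(S)\subseteq (F/2,F)$, which the paper leaves implicit.
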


\begin{proof}
Since $Z([S])\subseteq \overline{S}\subseteq S$, in order to prove that $ \overline{S}\in [S]$ it suffices to see that $\overline{S}$ is a numerical semigroup by Corollary \ref{11}. In fact, it is enough to prove that the sum of two elements of $B$ belongs to $\overline{S}$. Since $B\subseteq D(S)$, all the elements in $B$ are greater than $\frac{F}{2}$. Hence the sum of two elements in $B$ belongs to  $Z([S]) \subset \overline{S}$ and we are done.
 
Let $S'\in [S]$, by Corollary \ref{11}, there exists $B \subseteq D(S)$ such that $S'=Z([S])\cup B$. As $S'$ is a numerical semigroup, we conclude that $S'= Z([S])\cup \left(\left(B+Z([S])\right)\cap D(S)\right)$.
\end{proof}

Given $S\in\mathscr I(m,F)$ and $B \subseteq D(S)$, we write \[T(B)=\bigcup_{b\in B} \left(\{b\}+ Z([S])\right) \cap D(S).\]

The next result is a reformulation of Lemma \ref{12} with this new notation.

\begin{proposition}\label{13}
Let $m$ and $F$ be  positive integers such that $m\geq 3$, $F> 2m$,  $m\nmid F$. If $S\in \mathscr I(m,F)$ and
$A=\{T(B) ~|~ B\subseteq D(S) \}$, then $[S] =\{Z([S])\cup X ~|~ X\in A\}$.
\end{proposition}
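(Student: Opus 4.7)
The plan is to show that Proposition \ref{13} is essentially a cosmetic restatement of Lemma \ref{12}, after one recognizes that the operator $T(B)$ is nothing but the set $(B+Z([S]))\cap D(S)$ written in a different notation. So the proof reduces to a one-line set-theoretic identity followed by a direct appeal to Lemma \ref{12}.

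First I would establish the key identity: for every $B\subseteq D(S)$,
\[
T(B) \;=\; \big(B+Z([S])\big)\cap D(S).
\]
This follows from the definition of the sumset $B+Z([S])=\bigcup_{b\in B}\bigl(\{b\}+Z([S])\bigr)$ together with the distributivity of intersection over union:
\[
\big(B+Z([S])\big)\cap D(S) \;=\; \bigcup_{b\in B}\Big(\bigl(\{b\}+Z([S])\bigr)\cap D(S)\Big) \;=\; T(B).
\]

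With this identity in hand, both inclusions are immediate from Lemma \ref{12}. For the inclusion $\{Z([S])\cup X\mid X\in A\}\subseteq [S]$, pick $X=T(B)$ for some $B\subseteq D(S)$; then $Z([S])\cup X = Z([S])\cup\big((B+Z([S]))\cap D(S)\big)$, which is in $[S]$ by the first part of Lemma \ref{12}. For the reverse inclusion $[S]\subseteq\{Z([S])\cup X\mid X\in A\}$, take $S'\in [S]$; the second part of Lemma \ref{12} (its final sentence) gives some $B\subseteq D(S)$ with $S'=Z([S])\cup\big((B+Z([S]))\cap D(S)\big)=Z([S])\cup T(B)$, so $S'$ has the required form with $X=T(B)\in A$.

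There is no real obstacle here: the content is already in Lemma \ref{12}, and the only thing to verify is the notational identity $T(B)=(B+Z([S]))\cap D(S)$, which is a one-line distributivity check. The statement is thus a convenient reformulation that will be invoked in the construction of the algorithm for computing $[S]$ in the next step.
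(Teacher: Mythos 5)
Your proof is correct and matches the paper's treatment: the paper gives no separate argument for Proposition \ref{13}, simply declaring it ``a reformulation of Lemma \ref{12} with this new notation,'' and your one-line distributivity check $T(B)=\bigl(B+Z([S])\bigr)\cap D(S)$ is precisely the justification that assertion implicitly relies on. Nothing is missing.
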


Observe that the lattice structure of $[S]$ is same as the lattice structure of $A = \{T(B) ~|~ B\subseteq D(S) \}$. %Moreover, $A$ is generated as lattice by $T(d),\ d \in D(S)$.  

Now, we are ready to give an algorithmic procedure to compute the class $[S]$ from $S\in \mathscr I(m,F)$.

\begin{algorithm}\label{14}\mbox{}\par
\textsc{Input:} $S\in \mathscr I(m,F)$.\par
\textsc{Output:} $[S]$.
\begin{itemize}
\item[1.] Set $Z([S]) := \left\langle\theta(S)\cup \{m\}\right\rangle\cup \{F+1,\rightarrow\}$ and $D(S) := S\backslash Z([S])$.
\item[2.] Compute the set $A=\{T(B) ~|~ B\subseteq D(S) \}$.
\item[3.] Return the set $\{Z([S])\cup X ~|~ X\in A\}$.
\end{itemize}
\end{algorithm}

Let us see in an example how our algorithm works.

\begin{example}\label{16}
By Lemma \ref{15}, we have that $S=\left\langle 5,7,9,11\right\rangle\in \mathscr I(5,13)$.
Let us compute $[S]$ by using Algorithm \ref{14}. In this case, $\theta(S) = \varnothing$, therefore $Z([S])=\left\langle 5\right\rangle\cup\{14,\rightarrow\}$ and $D(S)=\{7,9,11,12\}.$ Since, $T(7) = \{7, 12\}, \ T(9) = \{9\},\ T(11) = \{11\}$ and $T(12) = \{12\}$, we obtain that 
\begin{align*}
A = \big\{ & \varnothing, \{9\}, \{11\}, \{12\}, \{7,12\}, \{9,11\}, \{9,12\}, \{11,12\},\\ & \{7,9,12\}, \{7,11,12\}, \{9,11,12\},  \{7,9,11,12\} \big\}.
\end{align*}
and thus 
$[S]=\{\left\langle 5\right\rangle\cup X\cup\{14,\rightarrow\} ~|~ X\in A\}.$
\end{example}

%%%%%%%%%%%%%%%%%%%%%%%%%%%%%%%%%%%%%%%%%%%%%%%%%%%%%%%%%%%
\subsection*{GAP Computations} 

Notice that the above algorithm completes the computation of $\mathscr L(m,F)$ (see Remark \ref{remAlg}). Thus, we can use Algorithms \ref{24}
and \ref{14} to compute the whole set of numerical semigroups with fixed multiplicity and Frobenius number. To this end, we have written the following GAP code which requires the GAP package \texttt{NumericalSgps} \cite{numericalsgps}.

{\small
\begin{verbatim}
 NumericalSemigroupsWithMultiplicityAndFrobeniusNumber := 
  function(m,F)
  local L,IrrmF,Lmf,S,small,T2,genZ,smallZ,SZ,D,pow,B,b,TB,TBD,bS;
  IrrmF:=
   IrreducibleNumericalSemigroupsWithMultiplicityAndFrobeniusNumber
   (m,F);
  Lmf:=IrrmF;
  for S in IrrmF do 
   small:=SmallElementsOfNumericalSemigroup(S);
   T2:=Intersection(small,[m .. Int(F/2)]);
   genZ:=Union(T2,[F+1 .. (F+m)]);
   SZ:=NumericalSemigroupByGenerators(genZ);
   smallZ:=SmallElements(SZ);
   D:=Difference(small,smallZ); 
   pow:=Combinations(D);
   for B in pow do
    TB:=[];
    for b in B do
     TB:=Concatenation(TB,b+smallZ);
    od;
    TBD:=Intersection(TB,D);
    bS:=NumericalSemigroupByGenerators(Union(genZ,TBD));
    Lmf:=Concatenation(Lmf,[bS]);
   od;
  od;
  return Set(Lmf);
 end;
\end{verbatim}
}

Again, this code is faster than the one currently implemented in GAP. For example, for $F=25$ and $m=11$, we have obtained the $896$ numerical semigroups with multiplicity $m$ and Frobenius number $F$ is $0.092$ seconds whereas the GAP command
\begin{center}
\texttt{Filtered(NumericalSemigroupsWithFrobeniusNumber(25),
i->Multiplicity(i)=11);}
\end{center}
included in the package \texttt{NumericalSgps} took $2.788$ seconds. Notice that the command above compute first the whole set of numerical semigroups with Frobenius number $25$ and then it filters the set by the given multiplicity.

We finish this section observing that we can combine that algorithm with the algorithm \ref{14} to calculate simultaneously the elements in the $ [S] $ classes, because we can compute the set $Z([S])$ by using the construction give in Theorem \ref{23}. More precisely:

\begin{corollary}
Let $m$ and $F$ be  positive integers such that $F\geq 3$,  $m\leq \frac{F+2}{2}$ and $m\nmid F$. If $T$ and $S \in\mathscr I(m,F)$ with $S \prec T$, then $$Z([S]) = \langle \theta(T) \cup \{m, F-x\} \rangle \cup \{F+1, \rightarrow \},$$ for some $x\in\operatorname{msg}(T)$ such that  $\frac{F}{2}<x<F$,  $2x-F\not\in S$,  $3x\neq 2F$, $4x\neq 3F$ and $\operatorname{m}(T)<F-x< \operatorname{r}(T)$.
\end{corollary}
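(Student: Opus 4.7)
The plan is to combine Proposition~\ref{10} (which already identifies $Z([S])$ as $\langle \theta(S) \cup \{m\}\rangle \cup \{F+1,\rightarrow\}$) with Theorem~\ref{23} (which describes the precise way $S$ and $T$ differ when $S \prec T$), and then verify a simple set identity for the $\theta$-sets.

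First, I would apply Proposition~\ref{10} to rewrite $Z([S])$ as $\langle \theta(S) \cup \{m\}\rangle \cup \{F+1,\rightarrow\}$. The entire task then reduces to showing that $\theta(S)\cup\{m\}$ and $\theta(T)\cup\{m,F-x\}$ generate the same submonoid of $\mathbb{N}$ (modulo the tail $\{F+1,\rightarrow\}$), which in turn will follow from the equality $\theta(S) = \theta(T) \cup \{F-x\}$ for a well-chosen $x$.

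Next, since $S \prec T$ in $\mathscr I(m,F)$, I would apply Theorem~\ref{23} (with the roles of the letters $S$ and $T$ in the theorem statement exchanged to match our notation) to obtain some $x\in\operatorname{msg}(T)$ satisfying the five conditions $\tfrac{F}{2}<x<F$, $2x-F\notin T$, $3x\neq 2F$, $4x\neq 3F$ and $\operatorname{m}(T)<F-x<\operatorname{r}(T)$, together with the description $S = (T\setminus\{x\})\cup\{F-x\}$. These are exactly the conditions listed in the statement.

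The main step is then the identity $\theta(S) = \theta(T)\cup\{F-x\}$. The only elements by which $S$ and $T$ differ are $x$ (which lies in $T\setminus S$) and $F-x$ (which lies in $S\setminus T$). Since $x > F/2$, $x$ contributes to neither $\theta(S)$ nor $\theta(T)$. On the other hand, $x < F$ forces $F-x > 0$, and the condition $\operatorname{m}(T)<F-x<\operatorname{r}(T)$ together with $x > F/2$ gives $m < F-x < F/2$, so that $F-x \in \theta(S)\setminus \theta(T)$. All elements strictly less than $F/2$ that lie in both $S$ and $T$ are common to $\theta(S)$ and $\theta(T)$. This yields the desired set equality, and substituting into the expression for $Z([S])$ provided by Proposition~\ref{10} gives $Z([S]) = \langle \theta(T)\cup\{m,F-x\}\rangle \cup \{F+1,\rightarrow\}$, as claimed. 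I do not foresee a real obstacle; the only point requiring a moment's care is confirming the bounds $m < F-x < F/2$, but these are immediate from the conditions already guaranteed by Theorem~\ref{23}.
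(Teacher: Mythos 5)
Your proposal is correct and follows essentially the same route as the paper: apply Theorem \ref{23} (with the roles of $S$ and $T$ matched to $S \prec T$) to get $S = (T\setminus\{x\})\cup\{F-x\}$, deduce $\langle\theta(S)\cup\{m\}\rangle = \langle\theta(T)\cup\{m,F-x\}\rangle$, and conclude via Proposition \ref{10}. The only difference is that you make explicit the set identity $\theta(S)=\theta(T)\cup\{F-x\}$ (with the check $m<F-x<F/2$), which the paper leaves implicit; this is a welcome clarification rather than a new approach.
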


\begin{proof}
By Theorem \ref{23}, $T = \big(S\backslash \{x\}\big)\cup  \big\{F-x\big\}$ for some $x\in\operatorname{msg}(S)$ such that  $\frac{F}{2}<x<F$,  $2x-F\not\in S$,  $3x\neq 2F$, $4x\neq 3F$ and $\operatorname{m}(S)<F-x< \operatorname{r}(S)$. Then $\langle \theta(S) \cup \{m\} \rangle = \langle \theta(T) \cup \{m, F-x\} \rangle$ and, by Proposition \ref{10}, we are done.
\end{proof}

This opens a door to potentially faster implementations of our algorithm for the computation of $\mathscr L(m,F)$.

%%%%%%%%%%%%%%%%%%%%%%%%%%%%%%%%%%%%%%%%%%%%%%%%%%%%%%%%%%%
\subsection*{On the (binomial ideal) structure of $[S]$.} We end this section by noticing that the structure of $[S]$ can be described in terms of certain binomial ideals in a polynomial ring over a field, where $[S]$ is the class of $S \in \mathscr L(m,F)$ for the equivalence relation defined by $\theta(S)$ in Section \ref{S3}. Recall that by Lemma \ref{7}, $[S]$ is a semigroup of sets with respect to the union. Moreover, by Proposition \ref{13}, $[S]$ is generated by $T(d),\ d \in D(S)$. Therefore, if $D(S) = \{d_1, \ldots, d_n\}$, we have the following semigroup homomorphism $$\varphi : \mathbb{N}^n \to [S]; \mathbf{e}_i \mapsto Z([S]) \cup T(d_i),\ i = 1, \ldots, n,$$ by convention $\varphi(0) = \varnothing.$ Associated to this homomorphism, we have the following binomial ideal $$I_{[S]} = \langle \mathbf X^\mathbf{u} - \mathbf X^\mathbf{v}\ \mid\ \varphi(\mathbf{u}) = \varphi(\mathbf{v}) \rangle \subseteq \Bbbk[X_1, \ldots, X_n],$$ where $\mathbf X^\mathbf{u} = X_1^{u_1} \cdots X_n^{u_n},\ \mathbf{u} = (u_1, \ldots, u_n) \in \mathbb{N}^n$. 

\begin{example}
The binomial ideal associated to $[S]$ in Example \ref{16} is the ideal generated by $\{ X_1^2 - X_1, \ldots, X_4^2 - X_4, X_3 X_4 - X_4\}$.
\end{example}

Observe that the dimension of $\Bbbk[X_1, \ldots, X_n]/I_{[S]}$ as $\Bbbk-$vector space is the same as the cardinality of $[S]$; equivalently, as the cardinality of $A = \{T(B)\ \mid\ B \subseteq D(S)\}$ by Proposition \ref{13}. In fact, $I_{[S]}$ is the ideal of a finite set of points $\mathcal Z$ with coordinates in $\{0,1\}$ and there is natural one-to-one correspondence with $\mathcal Z \to A; P = (x_1, \ldots, x_n) \mapsto \{d_i \mid x_i = 1\}$. Thus, we can compute the structure of $[S]$ through the primary decomposition of $I_{[S]}$ and vice versa.

%%%%%%%%%%%%%%%%%%%%%%%%%%%%%%%%%%%%%%%%%%%%%%%%%%%%%%%%%%%
%%%%%%%%%%%%%%%%%%%%%%%%%%%%%%%%%%%%%%%%%%%%%%%%%%%%%%%%%%%
\section{The set of numerical semigroups of a given multiplicity and genus}\label{S7}

Following the same idea than the expressed in Remark \ref{remAlg}, we can formulate an analogous type algorithm to compute the set of all numerical semigroups with multiplicity $ m $ and genus $ g $ that we denote by $\widehat{\mathscr{L}}(m, g) $. 

The next result characterizes the integers $m$ and $g$ such that there exists a numerical semigroup with multiplicity $m$ and genus $g$.

\begin{lemma}\label{26} \cite[Proposition 2.1]{IJNT}.
If $\left(m,g\right)\in \mathbb{N}\backslash \{0\}\times \mathbb{N}$, then  $\widehat{\mathscr {L}}(m,g)\neq\varnothing$ if and only if $\left(m,g\right)=\left(1,0\right)$ or 
$2\leq m \leq g+1$.
\end{lemma}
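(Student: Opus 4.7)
The plan is to prove the two implications separately, exploiting the fact that numerical semigroups with multiplicity $m\geq 2$ can be linked by deletion of a single minimal generator.

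For the forward direction, suppose $S\in\widehat{\mathscr L}(m,g)$. If $m=1$, then $1\in S$ together with closure under addition yields $S=\mathbb N$, so $g=0$ and hence $(m,g)=(1,0)$. Otherwise $m\geq 2$, and the very definition of multiplicity gives $\{1,2,\ldots,m-1\}\subseteq \mathbb N\setminus S$, so $g\geq m-1$, that is, $m\leq g+1$.

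For the converse, the case $(1,0)$ is covered by $S=\mathbb N$. The case $2\leq m\leq g+1$ I would handle by induction on $k:=g-(m-1)\geq 0$. The base case $k=0$ is realized by $S_0=\{0,m,\rightarrow\}$, whose gap set is exactly $\{1,\ldots,m-1\}$, so it has multiplicity $m$ and genus $m-1$. For the inductive step, given a numerical semigroup $S$ of multiplicity $m$ and genus $g'\geq m-1$, I would produce one of multiplicity $m$ and genus $g'+1$ by deleting the ratio $\operatorname{r}(S)$. Since $S$ contains every sufficiently large integer, $S\setminus\langle m\rangle$ is nonempty and $\operatorname{r}(S)$ is well defined; a decomposition $\operatorname{r}(S)=a+b$ with $a,b\in S\setminus\{0\}$ would produce some summand lying in $S\setminus\langle m\rangle$ and strictly smaller than $\operatorname{r}(S)$, contradicting the definition of the ratio, so $\operatorname{r}(S)\in\operatorname{msg}(S)$; and $\operatorname{r}(S)>m$ because $m\in\langle m\rangle$. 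Lemma~\ref{9} then ensures that $S\setminus\{\operatorname{r}(S)\}$ is a numerical semigroup, still of multiplicity $m$ (since $m$ is not removed) and with exactly one new gap, hence genus $g'+1$.

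The only mildly delicate point I foresee is verifying that the ratio is simultaneously a minimal generator and strictly greater than $m$, which is precisely what lets Lemma~\ref{9} apply while preserving the multiplicity; both properties follow from the elementary observations above. Iterating this deletion $k$ times starting from $S_0$ then yields a semigroup in $\widehat{\mathscr L}(m,g)$, completing the argument.
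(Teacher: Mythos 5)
Your argument is correct and complete. Note that the paper itself gives no proof of this statement: it is quoted verbatim from \cite[Proposition 2.1]{IJNT}, so there is no in-paper argument to compare against. Your two directions are sound: the forward implication is the elementary gap count $\{1,\dots,m-1\}\subseteq\mathbb N\setminus S$ (plus the degenerate case $m=1$), and for the converse your induction on $g-(m-1)$, starting from $\{0,m,\rightarrow\}$ and repeatedly removing the ratio, works because you correctly verify the two delicate points — that $\operatorname{r}(S)$ is a minimal generator (if $\operatorname{r}(S)=a+b$ with $a,b\in S\setminus\{0\}$, one summand lies in $S\setminus\langle m\rangle$ and is smaller, contradicting minimality) and that $\operatorname{r}(S)>m$ — so Lemma \ref{9} applies and the deletion raises the genus by exactly one while preserving the multiplicity. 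This is essentially the standard proof of the cited result, and it is pleasant that it reuses the ratio, which plays a central role elsewhere in the paper.
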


Since  $\widehat{\mathscr {L}} (g+1, g)=\{0,g+1\rightarrow \}$, from now on we will assume $2\leq m \leq g$.

The following result determines a necessary and sufficient condition for an integer $F$ to be the Frobenius number of a numerical semigroup of multiplicity $m$ and genus $g$.

\begin{proposition}\label{27} \cite[Theorem 2.4]{IJNT}.
Let $m$, $g$  and $F$ be  positive integers. If $2\leq m \leq g$ and  $m\nmid F$, then there exists $S\in \widehat{\mathscr {L}} (m, g)$ with $\operatorname{F}(S)=F$ if and only if $\lceil \frac{m g}{m-1} \rceil-1\leq F\leq 2g-1$.
\end{proposition}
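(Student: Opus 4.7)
The plan is to prove the biconditional by treating the necessity and sufficiency of the two bounds on $F$ separately. For the upper bound $F\leq 2g-1$, I would invoke the classical pairing argument: for every integer $x\in\{1,\ldots,F-1\}$, at least one of the pair $\{x,F-x\}$ must be a gap of $S$, since otherwise $F=x+(F-x)$ would belong to $S$, contradicting $F=\operatorname{F}(S)$. Counting yields at least $\lceil(F-1)/2\rceil$ gaps in the interval $(0,F)$, and together with the gap $F$ itself this forces $g\geq(F+1)/2$.

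For the lower bound $F\geq\lceil mg/(m-1)\rceil-1$, I would pass to the Ap\'ery set. Writing $\mathrm{Ap}(S,m)=\{0,w(1),\ldots,w(m-1)\}$ with $w(i)=q_i m+i$ and each $q_i\geq 1$, Lemma \ref{38} gives $g=\sum_{i=1}^{m-1}q_i$, while $F=\max_i\bigl((q_i-1)m+i\bigr)$. Setting $Q:=\max_i q_i$, the inequality $g\leq (m-1)Q$ yields $Q\geq\lceil g/(m-1)\rceil$. If $k$ denotes the number of indices $i$ with $q_i=Q$, then $k\geq g-(Q-1)(m-1)$, and the largest such index is at least $k$; hence
\[
F\geq (Q-1)m+k\geq (Q-1)+g\geq g+\lceil g/(m-1)\rceil-1=\lceil mg/(m-1)\rceil-1.
\]

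For sufficiency, given any $F$ in the admissible range with $m\nmid F$, I would construct the Ap\'ery set directly. Writing $F=qm+r$ with $1\leq r\leq m-1$, I would set $q_r=q+1$ (so that $w(r)-m=F$) and distribute the remaining $g-(q+1)$ units among the other $q_j$ under the constraints $q_j\geq 1$, $q_j\leq q$ for $j>r$, and $q_j\leq q+1$ for $j<r$; these are precisely the conditions ensuring $\operatorname{F}(S)=F$ and $\operatorname{m}(S)=m$. The upper bound $F\leq 2g-1$ provides enough slack to place each $q_j\geq 1$, while the lower bound $F\geq\lceil mg/(m-1)\rceil-1$ guarantees that the total admissible capacity $(q+1)r+q(m-1-r)=F-q$ is at least $g$, so such a distribution exists.

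The main obstacle is verifying that the resulting tuple really corresponds to a numerical semigroup, i.e., that it satisfies the compatibility inequalities of the form $q_i+q_j+\delta\geq q_{i+j-m\delta}$ recorded in Section \ref{S5}. This is where some care is needed: a balanced, near-constant distribution of the $q_j$ meets these inequalities automatically, so I would specify the assignment so that the values are as close to constant as possible (placing the larger $q_j$ at the smaller indices), after which the compatibilities follow from routine case analysis.
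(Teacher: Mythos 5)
The paper does not prove this statement; it is imported verbatim from \cite[Theorem 2.4]{IJNT}, so there is no internal proof to compare against and your argument has to stand on its own.

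Your necessity direction is correct and complete. The pairing argument gives $g\geq\lceil\frac{F+1}{2}\rceil$, hence $F\leq 2g-1$; and the Ap\'ery-set count, $F\geq(Q-1)m+k\geq(Q-1)+g\geq g+\lceil\frac{g}{m-1}\rceil-1=\lceil\frac{mg}{m-1}\rceil-1$, is a valid chain of inequalities (each step checks out, including $k\geq g-(Q-1)(m-1)$ and the identity $g+\lceil\frac{g}{m-1}\rceil=\lceil\frac{mg}{m-1}\rceil$).

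The sufficiency direction has a genuine gap at exactly the point you flag as ``routine case analysis.'' The constraints you impose on the tuple $(q_1,\ldots,q_{m-1})$ --- correct sum, $q_r=q+1$, $q_j\leq q+1$ for $j<r$, $q_j\leq q$ for $j>r$, $q_j\geq 1$ --- control the multiplicity, the genus and the Frobenius number, but they do not by themselves produce a numerical semigroup, and the closure inequalities $q_i+q_j+\delta\geq q_{(i+j)\bmod m}$ are where all the difficulty lives. Concretely, take $m=4$, $F=14$, $g=8$ (admissible: $\lceil 32/3\rceil-1=10\leq 14\leq 15$), so $q=3$, $r=2$: the tuple $(q_1,q_2,q_3)=(3,4,1)$ meets every constraint you list, yet $w(3)=7$ and $2\cdot 7=14<w(2)=18$ with $14\equiv 2\pmod 4$, so the corresponding set is not closed under addition; only the balanced tuple $(2,4,2)$ (giving $\langle 4,9,11\rangle$) works. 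Moreover, the closure inequalities are \emph{tight} at the extremes of the admissible range: for $m=6$, $F=19$, $g=10$ the balanced choice forces $q_i+q_j+1=4=q+1$ with equality in the constraints $i+j\equiv 1\pmod 6$, so there is no slack, and a uniform bound of the form $2\lfloor\frac{g-q-1}{m-2}\rfloor\geq q+1$ is simply false in general (for $q$ even and $g$ minimal one only gets $2a\geq q$). Proving that a distribution simultaneously satisfying the caps and all closure inequalities exists for \emph{every} admissible $(m,g,F)$ is the actual content of the theorem, and it is not supplied.

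A cleaner route avoids Kunz coordinates entirely. Observe that the two bounds are exactly $\lceil\frac{F+1}{2}\rceil\leq g\leq F-\lfloor F/m\rfloor$: the right-hand side is the genus of the minimal element $\langle m\rangle\cup\{F+1,\rightarrow\}$ of $\mathscr L(m,F)$ (Proposition \ref{1}), and the left-hand side is the genus of any irreducible element (Lemmas \ref{15} and \ref{5}(d), Proposition \ref{19}). For sufficiency, start from an irreducible $S\in\mathscr I(m,F)$ and repeatedly remove $x=\max\{s\in S~|~s\notin\langle m\rangle,\ s<F\}$; such an $x$ is necessarily a minimal generator, so by Lemma \ref{9} each removal yields a semigroup still in $\mathscr L(m,F)$ with genus increased by exactly one, and the process terminates at $\langle m\rangle\cup\{F+1,\rightarrow\}$. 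This realizes every genus in the interval without any verification of quadratic inequalities.
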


We denote by $\widehat{\mathscr {L}} (m, g, F)$ the set of all numerical semigroups with  multiplicity $m$, genus $g$ and Frobenius number $F$. As a consequence of Proposition \ref{27} we have the following result.

\begin{corollary}\label{28}
Let $m$, $g$  and $F$ be  positive integers. If $2\leq m \leq g$ and $\mathcal{B}_{m,g}:=\big\{F\in \{\lceil \frac{m g}{m-1} \rceil-1,\ldots 2g-1\} ~|~  m\nmid F \big\}$, then $$\widehat{\mathscr {L}}(m,g)=\bigcup_{F\in \mathcal{B}_{m,g}}\widehat{\mathscr {L}} (m, g, F).$$
\end{corollary}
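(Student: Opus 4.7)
The plan is straightforward since the corollary is essentially a bookkeeping consequence of Proposition \ref{27}. First I would observe that every numerical semigroup $S$ has a well-defined Frobenius number $\operatorname{F}(S)$, so the partition of $\widehat{\mathscr{L}}(m,g)$ according to the value of $\operatorname{F}(S)$ gives, trivially,
\[
\widehat{\mathscr{L}}(m,g) \;=\; \bigcup_{F \in \mathbb{Z}} \widehat{\mathscr{L}}(m,g,F).
\]
Thus the whole content of the statement is to identify exactly which values of $F$ can occur as the Frobenius number of some element of $\widehat{\mathscr{L}}(m,g)$, i.e.\ the set of $F$ for which $\widehat{\mathscr{L}}(m,g,F) \neq \varnothing$.

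Next, I would apply Proposition \ref{27}: under the hypothesis $2 \leq m \leq g$, an element $S \in \widehat{\mathscr{L}}(m,g,F)$ exists if and only if $m \nmid F$ and $\lceil \frac{mg}{m-1}\rceil - 1 \leq F \leq 2g-1$. (The condition $m \nmid F$ is automatic for any $S$ of multiplicity $m$, since $m \in S$ forces $\langle m \rangle \subseteq S$ and hence $F \notin \langle m \rangle$; but it is convenient to state it explicitly as part of the characterization.) These conditions are exactly the ones defining $\mathcal{B}_{m,g}$, so $\widehat{\mathscr{L}}(m,g,F) = \varnothing$ whenever $F \notin \mathcal{B}_{m,g}$, while $\widehat{\mathscr{L}}(m,g,F) \neq \varnothing$ for each $F \in \mathcal{B}_{m,g}$.

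Combining these two observations, the indexing set of the union can be restricted to $\mathcal{B}_{m,g}$ without losing any element, which yields the claimed equality. There is no real obstacle here: once Proposition \ref{27} is available, the argument is purely a matter of discarding empty summands from the trivial partition by Frobenius number.
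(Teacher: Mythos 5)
Your proposal is correct and matches the paper's (implicit) argument: the paper states the corollary as a direct consequence of Proposition \ref{27}, and your write-up simply makes explicit the partition of $\widehat{\mathscr{L}}(m,g)$ by Frobenius number together with the identification of the nonempty pieces, including the observation that $m\nmid F$ holds automatically when the multiplicity is $m$.
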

 
Therefore, to compute all elements in  $\widehat{\mathscr {L}}(m, g)$, it is enough to compute the elements in $\widehat{\mathscr {L}} (m, g, F)$ for each $F\in\mathcal{B}_{m,g}$.

The next algorithm is a reformulation of Algorithm \ref{14}, for the computation of the set of elements in the class of $S \in \mathscr I(m,F)$ with respect to $\sim$ with genus $g$.

\begin{algorithm}\label{29}\mbox{}\par
\textsc{Input:} $S\in \mathscr I(m,F)$ and $g$.\\
\textsc{Output:} $\{T\in [S] ~|~ \operatorname{g}(T)=g \}$
\begin{itemize}
\item[1.] Set $Z([S]) := \left\langle\theta(S)\cup \{m\}\right\rangle\cup \{F+1,\rightarrow\}$ and $D(S) := S\backslash Z([S])$.
\item[2.] Set $A=\{ T(B) ~|~ B\subseteq D(S) ~\textrm{and} ~\# T(B):=g\left(Z([S])-g\right)\}$
\item[3.] Return the set $\{Z([S])\cup X ~|~ X\in A\}$.
\end{itemize}
\end{algorithm}

We illustrate the above algorithm with the following example.

\begin{example}\label{30}
From Example \ref{16} we know that $S=\langle 5,7,9,11\rangle\in \mathscr I (5, 13)$. By using Algorithm \ref{29} we compute the set 
$\{T\in [S] ~|~ \operatorname{g}(T)=10 \}$. Since $Z([S])=\left\langle 5 \right\rangle\cup \{14,\rightarrow\},$ $D(S)=\{7,9,11,12\}$ and $g\left(Z([S])\right)=11$, we have that \[A=\{ T(B) ~|~ B\subseteq D(S) ~\textrm{and} ~\# T(B)=11-10=1 \} =  \{\{9\},\{11\},\{12\}\}.\] Hence,  $\{T\in [S] ~|~ \operatorname{g}(T)=10 \}=\{\langle 5\rangle\cup X\cup\{14\rightarrow\} ~|~X\in A\}=$
$\{\langle 5,9,16,17\rangle, \langle 5,11,14,17,18\rangle, \langle 5,12,14,16,18\rangle \}$.
\end{example}

Observe that by just controlling the cardinality of $T(B)$ suitably, we can modify our GAP code in Section \ref{S4} to produce the corresponding function which computes $\widehat{\mathscr {L}} (m, g, F)$.

\begin{remark}
For $q > 1$ and $g \geq 2$ we set \[\overline{\mathcal{B}}_{m,g} := \Big\{F\in \big\{\Big\lceil \frac{m g}{m-1} \Big\rceil-1,\ldots 2g-1 \big\} \cap \big\{(q-1)m + 1, \ldots, qm\big\} ~|~  m\nmid F \Big\}\] for each $m = 2, \ldots, g$. Clearly, 
\[
\bigcup_{m=2}^g \left(\bigcup_{F\in \overline{\mathcal{B}}_{m,g}}\widehat{\mathscr {L}} (m, g, F) \right)
\]
is the set of numerical semigroups with genus $g$ and depth $q$ (see Remark \ref{Depth1}). If $q=2$, this formula is rather explicit by Proposition \ref{3}. For $q > 2$, we can take advantage of our algorithms to compute this set.
\end{remark}

%%%%%%%%%%%%%%%%%%%%%%%%%%%%%%%%%%%%%%%%%%%%%%%%%%%%%%%%%%%
%%%%%%%%%%%%%%%%%%%%%%%%%%%%%%%%%%%%%%%%%%%%%%%%%%%%%%%%%%%

\end{document}